
\documentclass{amsart}


\usepackage{amsmath,amsthm, latexsym, amssymb,mathrsfs,  xypic}
\usepackage{stackengine,scalerel, url}
\usepackage[colorlinks]{hyperref}


\renewcommand{\det}{\mathrm{det}}

\newcommand{\Id}{\mathrm{Id}}

\newcommand{\Spa}{\mathrm{Spa}}

\newcommand{\Hom}{\mathrm{Hom}}

\newcommand{\Lie}{\mathrm{Lie}}

\newcommand{\GL}{\mathrm{GL}}

\newcommand{\et}{\mathrm{\acute{e}t}}

\newcommand{\cl}{\mathrm{cl}}

\newcommand{\Sym}{\mathrm{Sym}}

\newcommand{\HT}{\mathrm{HT}}

\newcommand{\sm}{\mathrm{sm}}

\newcommand{\univ}{\mathrm{univ}}

\newcommand{\locan}{\mathrm{loc-an}}

\newcommand{\calF}{\mathcal{F}}

\newcommand{\calO}{\mathcal{O}}

\newcommand{\calV}{\mathcal{V}}
\newcommand{\wh}[1]{\widehat{#1}}

\newcommand{\colim}{\mathrm{colim}}

\newcommand{\bbA}{\mathbb{A}}
\newcommand{\bbF}{\mathbb{F}}
\newcommand{\bbG}{\mathbb{G}}

\newcommand{\bbP}{\mathbb{P}}
\newcommand{\bbQ}{\mathbb{Q}}
\newcommand{\bbR}{\mathbb{R}}

\newcommand{\bbZ}{\mathbb{Z}}
\newcommand{\bbC}{\mathbb{C}}

\newcommand{\calI}{\mathcal{I}}
\newcommand{\bs}{\backslash}
\newcommand{\loc}{\mathrm{loc}}
\newcommand{\frakp}{\mathfrak{p}}
\newcommand{\diag}{\mathrm{diag}}
\newcommand{\gl}{\mathfrak{gl}}
\newcommand{\calT}{\mathcal{T}}

\numberwithin{equation}{subsubsection}
\theoremstyle{plain}

\newtheorem{maintheorem}{Theorem} 
 
\newtheorem{maincorollary}[maintheorem]{Corollary}

\newtheorem*{theorem*}{Theorem}

\newtheorem{theorem}[subsubsection]{Theorem}

\newtheorem{lemma}[subsubsection]{Lemma}

\theoremstyle{definition}

\newtheorem{definition}[subsubsection]{Definition}
\newtheorem{remark}[subsubsection]{Remark}


\begin{document}

\title[OCMF are highest weight vectors in completed cohomology]{Overconvergent modular forms are highest weight vectors in the Hodge-Tate weight zero part of completed cohomology}
\author{Sean Howe}
 
\maketitle
    
\begin{abstract}We construct a  $(\mathfrak{gl}_2, B(\mathbb{Q}_p))$ and Hecke-equivariant cup product pairing between overconvergent modular forms and the local cohomology at $0$ of a sheaf on $\mathbb{P}^1$, landing in the compactly supported completed $\mathbb{C}_p$-cohomology of the modular curve. The local cohomology group is a highest-weight Verma module, and the cup product is non-trivial on a highest weight vector for any overconvergent modular form of infinitesimal weight not equal to $1$.  For classical weight $k\geq 2$, the Verma has an algebraic quotient $H^1(\mathbb{P}^1, \mathcal{O}(-k))$, and on classical forms the pairing factors through this quotient, giving a geometric description of ``half" of the locally algebraic vectors in completed cohomology; the other half is described by a pairing with the roles of $H^1$ and $H^0$ reversed between the modular curve and $\mathbb{P}^1$. Under minor assumptions, we deduce a conjecture of Gouvea on the Hodge-Tate-Sen weights of Galois representations attached to overconvergent modular forms. Our main results are essentially a strict subset of those obtained independently by Lue Pan, but the perspective here is different and the proofs are short and use simple tools: a Mayer-Vietoris cover, a cup product, and a boundary map in group cohomology. \end{abstract}

\setcounter{tocdepth}{1}
\tableofcontents

\section{Introduction}

In this work, we show that cuspidal overconvergent modular forms of infinitesimal weight $\neq 1$ give rise via an explicit construction to highest weight vectors in the compactly supported completed cohomology of the modular curve (Theorem \ref{maintheorem:ocmf-cup} below). Using this result, we compute the Hodge-Tate-Sen weights of the Galois representation attached to an overconvergent eigenform (possibly of infinite slope!) outside of weight 1 and assuming the residual representation is absolutely irreducible (Corollary \ref{c:hts-weights} below). This verifies a conjecture of Gouvea \cite[Conjecture 4]{gouvea:continuity} in most cases.  Our result mirrors the classical picture, where complex cuspidal modular forms are naturally identified with highest weight vectors in the corresponding automorphic representation of $\GL_2(\bbR)$. 

In fact, just as in the complex case, what one obtains more canonically is a map from the associated Verma module: Our maps are most naturally formulated as cup products between overconvergent modular forms and local cohomology groups on $\bbP^1$, giving a direct connection between the structure of completed cohomology and classical geometric representation theory. 

After preparing an earlier draft, we learned of a preprint by Lue Pan \cite{pan:locally-an} giving a complete description of the Hodge-Tate-Sen decomposition of the highest weight vectors in the locally analytic part of completed cohomology. In particular, \cite[Theorem 5.4.2]{pan:locally-an} essentially subsumes our Theorem \ref{maintheorem:ocmf-cup}, and goes much further. There are two advantages of the approach we present here:
\begin{enumerate}
\item Our proof is brief and relatively straightforward; a reader interested in the results of \cite{pan:locally-an} might stop here first to get a quick geometric perspective on why overconvergent modular forms should relate to locally analytic vectors in completed cohomology in the first place.
\item By swapping the role of $H^0$ and $H^1$ between the modular curve and $\bbP^1$, it is easy to see that there is a dual geometric picture connecting to work of Boxer-Pilloni \cite{boxer-pilloni:higher-hida-modular} on higher Hida theory (cf. Remark \ref{remark:dual-picture} below for an explanation of how this may fit into the Shimura isomorphism of \cite{pan:locally-an}). 
\end{enumerate}

\subsection{Summary of construction and results}\label{ss:summary}
The idea of our construction can be stated very naively using the geometry of the Hodge-Tate period map. We fix a prime-to-$p$ level $K^p$ (a compact open subgroup of $\GL_2(\bbA_f^{(p)})$) and write:
\begin{enumerate}
\item $X/\bbC_p$ for the perfectoid compactified modular curve of prime-to-$p$ level $K^p$ and infinite level at $p$,
\item $\calI$ for the ideal sheaf of the boundary (cusps) on $X$,
\item $\pi_\HT: X \rightarrow \bbP^1$ for the Hodge-Tate period map, 
\item $0=[0:1] \in \bbP^1(\bbQ_p)$, 
\item $B \subset \GL_{2,\bbZ_p}$ for the upper triangular Borel stabilizing ${0\in \bbP^1(\bbQ_p)=\bbP^1(\bbZ_p)}$, $N \subset B$ for its unipotent radical and $N_0=N(\bbZ_p)$, 
\item and $z=x/y$ for the canonical coordinate on $\bbP^1$ at $0$.
\end{enumerate} 

The topological closure of the canonical component of the ordinary locus in $X$ is the fiber $X_{\{0\}}$, and any cuspidal overconvergent modular form can naturally be identified with a function in $H^0(X_{|z| \leq \epsilon}, \calI)$ for some $\epsilon$ depending on the radius of overconvergence (as in  \cite{chojecki-hansen-johannson:ocmf, howe:thesis}).  

On the other hand, a result of Scholze \cite{scholze:torsion} implies that the compactly supported $\bbC_p$-completed cohomology $\tilde{H}^1_{c, \bbC_p}$ of the tower of modular curves can be computed as the analytic cohomology $H^1(X, \calI)$. If we consider the cover of $X$ by $X_{|z| \leq \epsilon}$ and $X_{|z| \geq \epsilon}$, then both of these and their intersection $X_{|z|=\epsilon}$ are affinoid perfectoids, and thus the analytic cohomology $H^1(X, \calI)$ is computed by the \v{C}ech/Mayer-Vietoris complex for this covering. As a consequence, we find classes in $H^1(X,\calI)$ are represented by \emph{functions} in $H^0(X_{|z|=\epsilon}, \calI)$.

To obtain the class attached to an overconvergent modular form we do the simplest possible thing that is not obviously trivial: we send the function $f$ in $H^0(X_{|z| \leq \epsilon}, \calI)$ attached to an overconvergent modular form of weight $\kappa$ to the class $[\frac{f}{z}]$. A simple computation verifies that this is a highest weight vector of weight $\Lie(\kappa)-2$; the key point then is to verify that it is not zero! This is accomplished by composing with a certain restriction map to functions on the generic fiber of the Igusa tower $X_{0}^\circ/N_0$ -- this composition can computed explicitly and identified with multiplication by $1 - \Lie(\kappa)$ (which explains why weight one is excluded from our results!). 

When the modular form is defined over a finite extension of $\bbQ_p$, then the whole construction can be carried out over that extension, and we find that the resulting vector lands in the Hodge-Tate weight zero part of completed cohomology. From this we deduce that the Galois representation attached to \emph{any}\footnote{Under the assumption that the residual representation is absolutely irreducible.} overconvergent modular form of weight not equal to 1 defined over $\overline{\bbQ_p}$ admits zero as a Hodge-Tate-Sen weight (Corollary \ref{c:hts-weights}), proving a conjecture of Gouvea previously known only for classical modular forms and finite slope overconvergent modular forms. 

Restricting from overconvergent to classical modular forms, we find our construction gives a geometric realization of the Hodge-Tate weight zero part of the locally algebraic vectors in completed cohomology via a simple cup product between classical modular forms and the Borel-Weil-Bott realizations of algebraic representations in the first cohomology of line bundles on the flag variety $\bbP^1$. In fact, comparing with work of Faltings and Emerton, we find that this describes ``half" of the locally algebraic vectors; the other half come from swapping the role of $H^1$ and $H^0$ between the modular curve and $\bbP^1$.

\subsection{Interpolation of cup products and the main result}
The explicit description in terms of functions summarized above is useful for computations but at first glance appears rather ad hoc. To understand the situation more clearly, we identify our map with a cup product.

The modular sheaf $\omega$ on $X$ is naturally identified with $\pi_\HT^*\calO(1)$, thus we obtain cup product maps 
\begin{equation}\label{eqn:alg-cup-prod-1} H^0(X, \omega^k \otimes \calI) \otimes H^1(\bbP^1, \calO(-k)) \rightarrow H^1(X, \calI). \end{equation}
The right-hand term in the pairing is an algebraic representation of $\GL_2(\bbQ_p)$ which is non-zero for $k \geq 2$. If we pass to smooth vectors for the $\GL_2(\bbQ_p)$-action on the left we obtain the space of classical weight $k$ cusp forms $S_k^\cl$. Pairing a classical form with a distinguished highest weight vector in the algebraic representation gives exactly the class $[f/z]$ described in the summary above: indeed, this highest weight vector is represented by the meromorphic section $y^{-k}/z$ on $\bbP^1$, and the section $y$ of $\calO(1)$ pulls back to the canonical trivialization of $\omega$ via $\pi_\HT$. Theorem \ref{maintheorem:ocmf-cup} below will show this pairing is injective. 

The group $H^1(\bbP^1, \calO(-k))$ admits a natural surjection from the algebraic local cohomology of $\calO(-k)$ at $0$, $H^1_{\{0\},\mathrm{alg}}(\bbP^1, \calO(-k))$ -- representation-theoretically this can be identified with the surjection from the corresponding highest weight Verma module to the algebraic representation. These local cohomology classes can be paired with overconvergent modular forms, giving an extension of (\ref{eqn:alg-cup-prod-1}):  When we pair with a classical modular form, the map factors through the algebraic representation, but in general it does not. Moreover, while the algebraic representations cannot be interpolated outside of classical weights, the local cohomology Verma modules can be interpolated geometrically along with the cup product pairing with overconvergent modular forms. We explain this now. 

\subsubsection{Overconvergent and classical modular forms} For $\kappa$ a continuous $\bbC_p$-valued character of $\bbZ_p^\times$, we define a $B(\bbQ_p)$-equivariant sheaf $\calO(\kappa)$ on the germ of $ 0 =[0:1] \in \bbP^1.$
We write $\omega^\kappa = \pi_{\HT}^{-1} \calO(\kappa)$. When $\kappa$ is the classical character $z \mapsto z^k$, $\calO(\kappa)$ extends to the standard $\GL_2(\bbQ_p)$-equivariant sheaf $\calO(k)$ on $\bbP^1$. 

We consider the space of overconvergent sections
\[ H^{0, \dagger}(X_{\{0\}}, \omega^\kappa \otimes \calI ) = \colim_{\epsilon \rightarrow 0} H^0(X_{|z| \leq \epsilon}, \omega^\kappa \otimes \calI ).  \]
It admits an action of $B(\bbQ_p)$. Moreover, because each neighborhood $|z| \leq \epsilon$ is stabilized by some $\Gamma_0(p^n)$, the action of $B(\bbZ_p)$ on any section extends, and it makes sense to define the subspace of smooth vectors, denoted with a superscript $\sm$, as those stabilized by some compact open subgroup of $\GL_2(\bbQ_p)$. We write 
\[ S_{\kappa}^\dagger = H^{0, \dagger}(X_{\{0\}}, \omega^\kappa \otimes \calI)^\sm. \]
We consider $S_{\kappa}^\dagger$ as a $(\gl_2, B(\bbQ_p))$-module with trivial $\gl_2$-action. For $k\in\bbZ$, restriction gives a natural $(\gl_2, B(\bbQ_p))$-equivariant injection 
\begin{equation}\label{eqn:classical-embedding} S_k^\cl \hookrightarrow S_{k}^\dagger. \end{equation}

\begin{remark}
The traditional definition of overconvergent cusp forms of weight $\kappa$ coincides with the invariants $(S_\kappa^\dagger)^{B(\bbZ_p)}$. We prefer the larger space $S_{\kappa}^\dagger$ here as it is better suited for representation-theoretic arguments. In particular, it is convenient to have the map (\ref{eqn:classical-embedding}) available rather than twisting the weight to see all classical forms. In that vein, we note that if $\chi$ is a finite order character of $\bbZ_p^\times$ then we have a canonical identification of $(\gl_2, B(\bbQ_p))$ representations $S_{\chi\kappa}^\dagger= S_{\kappa}^\dagger(\chi^{-1}),$ where the notation $(\chi^{-1})$ on the right denotes a twist by
\[ \begin{bmatrix} a & b \\ 0 & d \end{bmatrix} \mapsto \chi^{-1}(d|d|_p). \]
In particular, when $\Lie \kappa = k \in \bbZ$, we usually just consider $\kappa=z^k$. 
\end{remark}

\subsubsection{The pairing}
To compute $\tilde{H}^1_{c,\bbC_p}=H^1(X, \calI)$, we take the colimit over $\epsilon$ of the Mayer-Vietoris sequences\footnote{We verify the necessary vanishing in Lemma \ref{lemma:vanishing} so that one can compute the cohomology of $\calI$ using these covers by quasi-Stein perfectoids rather than affinoid perfectoids, though one could also arrange the entire argument using only covers by affinoid perfectoids.} for the covers of $X$ by $X_{|z| \leq \epsilon}$ and $X_{|z|>0}$:
\begin{equation}\label{eqn:intro-fund-ses} 0 \rightarrow \substack{ H^{0,\dagger}(X_{\{0\}}, \calI)  \\ \oplus H^0(X_{|z|>0}, \calI) } \rightarrow \colim_\epsilon H^0(X_{0<|z| \leq \epsilon}, \calI) \rightarrow H^1(X, \calI) \rightarrow 0. \end{equation}

\begin{remark} Merging two terms gives the local cohomology sequence
\[ 0 \rightarrow  H^0(X_{|z|>0}, \calI) \rightarrow H^1_{X_{\{0\}}}(X, \calI) \rightarrow H^1(X, \calI) \rightarrow 0. \]
\end{remark}

We write $H^1_{\{0\}}(\bbP^1, \calO(\kappa^{-1}))$ for the analytic local cohomology of $\calO(\kappa^{-1})$ at $0 \in \bbP^1$; its elements are represented by sections in a punctured neighborhood of $0$ (see \ref{subsub.local-cohomology}). There is a natural identification $\calO(\kappa)^{*}=\calO(\kappa^{-1})$, thus we obtain a pairing
\begin{equation}\label{eq:first-version-ocmf-pairing} S_\kappa^\dagger \otimes H^1_{\{0\}}(\bbP^1, \calO(\kappa^{-1})) \rightarrow H^1(X, \calI)=\tilde{H}^1_{c,\bbC_p} \end{equation}
by pairing an overconvergent modular form with a section on a punctured neighborhood to obtain an element in $\colim_\epsilon H^0(X_{0<z \leq \epsilon}, \calI)$, then mapping to $H^1(X, \calI)$ where the result becomes well-defined. 

We show that (\ref{eq:first-version-ocmf-pairing}) factors through a map of $(\gl_2, B(\bbQ_p))$-modules to the locally-analytic vectors $\tilde{H}^{1,\loc-an}_{c,\bbC_p}$. The space $H^1_{\{0\}}(\bbP^1, \calO(\kappa^{-1}))$ contains a canonical Verma module $V_{\kappa^{-1}}$ of highest weight $\Lie \kappa-2$ spanned by germs of sections meromorphic at $0$, and to describe (\ref{eq:first-version-ocmf-pairing}) further it is convenient to consider the restriction to $V_{\kappa^{-1}},$
\begin{equation}\label{eq:ocmf-verma-pairing} S_\kappa^\dagger \otimes V_{\kappa^{-1}} \rightarrow \tilde{H}^{1,\locan}_{c,\bbC_p}. \end{equation} 

By the equivariance properties of $\pi_\HT$, the prime-to-$p$ action on $V_{\kappa^{-1}}$ is trivial and thus the prime-to-$p$ action on the left is concentrated in $S_\kappa^\dagger$. In particular, for any prime-to-$p$ Hecke eigensystem $\frakp$ we obtain an induced map on eigenspaces
\[ S_{\kappa}^\dagger[\frakp] \otimes  V_{\kappa^{-1}} \rightarrow \tilde{H}^1_{c,\bbC_p}[\frakp]. \]

$V_{\kappa^{-1}}$ is irreducible unless $\Lie \kappa = k \geq 2$, when $V_{\kappa^{-1}}$ admits an algebraic quotient. We write the kernel as $V_{\kappa^{-1}}'$, a Verma module of highest weight $-k$. 

\begin{maintheorem} \label{maintheorem:ocmf-cup}
If $\Lie \kappa \not \in \bbZ_{\geq 1}$, then (\ref{eq:ocmf-verma-pairing}) is a $(\gl_2, B(\bbQ_p))$ and prime-to-$p$ Hecke equivariant injection. For $\kappa=z^k$, $k \geq 2$, the kernel is $S_k^\cl \otimes V_{-k}'$, and the pairing
\[ S_k^\cl \otimes H^1(\bbP^1, \calO(-k)) \hookrightarrow \tilde{H}^{1,\locan}_{c,\bbC_p} \]
induced by passing to the quotient is identified with the restriction of the global $\GL_2(\bbQ_p)$ and prime-to-$p$ Hecke equivariant cup product
\[ H^0(X, \omega^k \otimes \calI) \otimes H^1(\bbP^1, \calO(-k)) \rightarrow H^1(X, \calI). \]
\end{maintheorem}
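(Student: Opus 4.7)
The plan is to construct the pairing directly from the Mayer-Vietoris sequence (\ref{eqn:intro-fund-ses}) and then reduce all claims to computations on highest weight vectors of the Verma $V_{\kappa^{-1}}$. For $f \in S_\kappa^\dagger$ represented on $X_{|z| \leq \epsilon}$ and a representative of $v \in V_{\kappa^{-1}}$ as a section of $\calO(\kappa^{-1})$ on a punctured neighborhood of $0 \in \mathbb{P}^1$, the product $f \cdot \pi_{\HT}^*(v)$ lives on the overlap $X_{0 < |z| \leq \epsilon'}$, and its Mayer-Vietoris boundary gives the image in $H^1(X, \calI) = \tilde H^1_{c, \mathbb{C}_p}$. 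The $(\gl_2, B(\mathbb{Q}_p))$- and prime-to-$p$ Hecke equivariance are built into the construction, since $B(\mathbb{Q}_p)$ stabilizes $0 \in \mathbb{P}^1$ and hence the cover after passing to the colimit in $\epsilon$, while Hecke correspondences at tame level act on all terms of the Mayer-Vietoris complex; local analyticity is inherited from the same property of the individual terms.

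For the injectivity assertion when $\Lie\kappa \notin \mathbb{Z}_{\geq 1}$, I would use that $V_{\kappa^{-1}}$ is then irreducible. Because $\gl_2$ acts trivially on $S_\kappa^\dagger$, the pairing corresponds by adjunction to a linear map $S_\kappa^\dagger \to \Hom_{\gl_2}(V_{\kappa^{-1}}, \tilde H^{1, \locan}_{c, \mathbb{C}_p})$, and irreducibility makes evaluation at the highest weight vector $v_0$ an injection on this Hom space. Injectivity thus reduces to injectivity of the map $f \mapsto f \cup v_0 = [f/z]$, which I would prove by composing with a restriction to sections on the generic fiber of the Igusa tower $X_0^\circ/N_0$: define this restriction via a boundary in group cohomology for the $N_0$-action on the Mayer-Vietoris terms, and compute directly that the composition equals $(1 - \Lie \kappa)$ times the standard Igusa/$q$-expansion restriction. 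Since the $q$-expansion is injective on $S_\kappa^\dagger$ and $\Lie\kappa \neq 1$, this closes the first claim.

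For the classical case $\kappa = z^k$ with $k \geq 2$, the containment $S_k^\cl \otimes V'_{-k} \subseteq \ker$ is a direct Mayer-Vietoris computation. Any $v \in V'_{-k}$ is, by construction, a \v{C}ech coboundary $v = \alpha - \beta$ for the $\mathbb{P}^1$-cover by a neighborhood of $0$ and its complement; a classical $f$ extends to all of $X$, so $f\cdot v = f\alpha - f\beta$ on the overlap $X_{|z|=\epsilon}$ is itself a Mayer-Vietoris coboundary on $X$ and has vanishing class. The identification of the induced map $S_k^\cl \otimes H^1(\mathbb{P}^1, \calO(-k)) \to H^1(X, \calI)$ with the global cup product then follows from functoriality of the cup product across the two covers, and its injectivity follows by the same highest-weight-plus-Igusa argument applied to the irreducible finite-dimensional algebraic representation $H^1(\mathbb{P}^1, \calO(-k))$, using $1 - k \neq 0$.

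The main obstacle is showing the kernel is no larger than $S_k^\cl \otimes V'_{-k}$. My plan is to exploit that $S_\kappa^\dagger \otimes V_{\kappa^{-1}}$ is locally $\mathfrak{n}^+$-nilpotent: every non-zero element of the full kernel $K$ has, after iterated raising, a non-zero highest weight component in the $\mathfrak{n}^+$-kernel $S_\kappa^\dagger \otimes v_0 \oplus S_\kappa^\dagger \otimes v_{k-1}$ of $S_\kappa^\dagger \otimes V_{\kappa^{-1}}$. The $v_0$-component is controlled by the Igusa computation of the second paragraph, which forces it to vanish since $1-k \neq 0$. The delicate step is to show that $f \otimes v_{k-1} \in K$, i.e., $[fz^{-k}] = 0$ in $H^1(X, \calI)$, forces $f \in S_k^\cl$: classical forms satisfy this by the extension argument, and the reverse implication will require a further direct Mayer-Vietoris analysis showing that non-classicality of an overconvergent $f$ manifests in a non-trivial Mayer-Vietoris class for $fz^{-k}$. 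Once this highest-weight description of $K$ is in place, the Verma structure of $V'_{-k}$ combined with the triviality of the $\gl_2$-action on $S_\kappa^\dagger$ propagates it to the desired identification $K = S_k^\cl \otimes V'_{-k}$.
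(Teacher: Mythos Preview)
Your first three paragraphs track the paper's argument closely: the construction via Mayer--Vietoris, the $(\gl_2, B(\bbQ_p))$-equivariance, the reduction to highest weight vectors via irreducibility of $V_{\kappa^{-1}}$ when $\Lie\kappa \notin \bbZ_{\geq 2}$, the $N_0$-group-cohomology boundary composed with restriction to the ordinary/Igusa locus yielding the factor $(1-\Lie\kappa)$, the containment $S_k^\cl \otimes V'_{-k} \subset \ker$, and the identification with the global cup product --- all of this is what the paper does, and your outline is accurate. (The local analyticity step requires a bit more care than you indicate: the paper passes through the auxiliary Banach spaces $H^1_{|z|<\epsilon}(\bbP^1, \calO(\kappa^{-1}))$ carrying a locally analytic $\Gamma_0(p^{n+1})$-action and checks a boundedness estimate, but this is routine.)

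The genuine gap is in your fourth paragraph. You correctly isolate the remaining issue: after the highest-weight analysis the kernel has the form $W \otimes V'_{-k}$ with $S_k^\cl \subset W \subset S_k^\dagger$, and one must show that $[fz^{-k}]=0$ in $H^1(X,\calI)$ forces $f$ to be classical. But your proposed ``further direct Mayer--Vietoris analysis'' is not a proof; it is a restatement of the problem. Unwinding, the vanishing means $fz^{-k} = g + h$ on $X_{0<|z|\leq\epsilon}$ with $g$ extending over $X_{|z|\leq\epsilon}$ and $h$ over $X_{|z|>0}$, and extracting from this a global extension of $f$ as a section of $\omega^k\otimes\calI$ is precisely a classicality statement for which there is no elementary mechanism in sight.

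The paper takes a completely different route for this step. It observes that the induced injection $W \otimes H^1(\bbP^1,\calO(-k)) \hookrightarrow H^1(X,\calI)$ is already defined over $\bbQ_p$, hence lands in the Hodge--Tate weight zero part of the locally algebraic vectors in $\tilde{H}^1_c$. It then invokes Faltings's Hodge--Tate decomposition together with Emerton's computation of locally algebraic vectors to see that the $H^1(\bbP^1,\calO(-k))$-isotypic piece of that space is \emph{abstractly} isomorphic to $S_k^\cl \otimes H^1(\bbP^1,\calO(-k))$; admissibility of $S_k^\cl$ then forces $W = S_k^\cl$. (A separate lemma is needed to check that taking locally algebraic vectors commutes with passing to Hodge--Tate weight zero.) So the missing ingredient in your plan is this external input from $p$-adic Hodge theory and Emerton's interpolation theorem; the direct Mayer--Vietoris approach you sketch does not supply it.
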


\begin{remark} If $\Lie \kappa = 1$, then pairing with a \emph{classical} form is easily seen to give zero (see the second part of Lemma \ref{lemma:cup-classical-zero}). For non-classical weight one forms, we expect that the pairing can be non-zero, e.g. for a non-classical weight one specialization of a Hida family. 
\end{remark}

\begin{remark}
If we fix an eigensystem corresponding to a finite slope form, then, comparing with \cite {emerton:local-global-conjecture}, we find that our maps\footnote{In the ordinary split case, one must invoke the existence of an overconvergent pre-image under $\theta^{k-1}$ for the evil twin and then also apply (\ref{eq:ocmf-verma-pairing}) in weight $2-k$, just as in \cite[Remark 7.6.3]{emerton:local-global-conjecture}} witness in most cases the full locally analytic Jacquet module of the corresponding representation of $\GL_2(\bbQ_p)$.
\end{remark}

\subsubsection{A simple proof}

The main point in Theorem \ref{maintheorem:ocmf-cup} is the injectivity on the generating highest weight vectors, and it admits a remarkably simple proof: we take the long-exact sequence in continuous group cohomology for the $N_0$-action on the short exact sequence (\ref{eqn:intro-fund-ses}), from which we extract a map
\begin{equation}\label{eq:intro-N0-sequence} H^1(X, \calI)^{N_0} \rightarrow  H^1(N_0, H^{0,\dagger}(X_{\{0\}}, \calI)). \end{equation}
To verify a class is non-trivial, we can apply (\ref{eq:intro-N0-sequence}) then restrict to a neighborhood of either a cusp or an ordinary point, where the action of $N_0$ is explicit. By the construction of (\ref{eq:ocmf-verma-pairing}), we have a function representing the cohomology class of the image of a highest weight vector in $H^1(X, \calI)$ and a precise formula for the action of $N_0$ on it; thus we can compute exactly a representing cocycle for the restriction of its image under (\ref{eq:intro-N0-sequence}) to verify the class is nonzero.

\begin{remark}
The majority of this material was worked out several years ago, but the author was unable to prove this  injectivity at the time. The idea to use (\ref{eq:intro-N0-sequence}) was inspired by Ana Caraiani's talk in the Recent Advances in Modern $p$-adic Geometry (RAMpAGe) seminar on August 6, 2020 on her joint work with Elena Mantovan and James Newton. They use the restriction map on $\calO^+/p^n$-cohomology from the diamond $X/N_0$ to the Igusa variety $X_{\{0\}}^\circ/N_0$ to compare the ordinary part of completed cohomology with ordinary $p$-adic modular forms \`{a} la Hida. Our map in continuous group cohomology is a pedestrian reinterpretation of this restriction map that is well-adapted to our setup.  
\end{remark}

\subsubsection{A Galois corollary}

We say a weight $\kappa$ is defined over $\overline{\bbQ_p}$ if $\Lie \kappa \in \overline{\bbQ_p}$ (equivalently, $\kappa$ is valued in $\overline{\bbQ_p}$). Any such $\kappa$ is valued in a finite extension of $\bbQ_p$, thus it makes sense to discuss overconvergent modular forms of weight $\kappa$ defined over $\overline{\bbQ_p}$ (any one of which is defined over some finite extension of $\bbQ_p$).

 In particular, if we fix an overconvergent modular form $f$ defined over $\overline{\bbQ_p}$, then all of the spaces and maps involved in pairing with $f$ via (\ref{eq:ocmf-verma-pairing}) are defined already over a finite extension of $\bbQ_p$, and we deduce that the image lies in the Hodge-Tate weight zero part of $\tilde{H}^{1}_{c}$. Using this observation we establish the following result, which was conjectured by Gouvea \cite[Conjecture 4]{gouvea:continuity} for weights $k \in \bbZ$ without the hypothesis on the residual representation, and was previously known for finite slope and classical forms. 

\begin{maincorollary} \label{c:hts-weights} If $f$ is an overconvergent eigenform defined over $\overline{\bbQ_p}$ of weight $\kappa$ with $\Lie \kappa \neq 1$, and the attached Galois representation $\rho$ is such that $\overline{\rho}$ is absolutely irreducible, then $\rho_f$ has Hodge-Tate-Sen weights $(0, \Lie \kappa - 1)$.   
\end{maincorollary}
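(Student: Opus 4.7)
The plan is to combine Theorem~\ref{maintheorem:ocmf-cup} with the Hodge-Tate structure on $\tilde{H}^1_{c,\bbC_p}$ and a Chebotarev argument using the absolute irreducibility of $\overline{\rho}_f$. Fix a finite extension $E/\bbQ_p$ over which $f$ is defined and containing $\Lie\kappa$ together with all prime-to-$p$ Hecke eigenvalues of $f$. The pairing \eqref{eq:ocmf-verma-pairing} is $E$-linear, so for $v_\kappa \in V_{\kappa^{-1}}$ a highest weight vector, the image $w \in \tilde{H}^{1,\locan}_{c,\bbC_p}$ of $f \otimes v_\kappa$ lies in an $E$-rational subspace and is nonzero by Theorem~\ref{maintheorem:ocmf-cup}: directly when $\Lie \kappa \notin \bbZ_{\geq 1}$, and when $\Lie\kappa = k \in \bbZ_{\geq 2}$ via the injectivity on the algebraic quotient, since $v_\kappa$ maps to a nonzero highest weight vector of $H^1(\bbP^1,\calO(-k))$.

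The key geometric input is that $w$ lies in the Hodge-Tate-Sen weight zero subspace $H \subset \tilde{H}^{1,\locan}_{c,\bbC_p}$. In the Mayer-Vietoris complex, $w$ is represented by glueing $f$ with a meromorphic-at-$0$ section of $\calO(\kappa^{-1})$ on $\bbP^1$ pulled back through $\pi_{\HT}$. Since $\pi_{\HT}$ organizes the Hodge-Tate filtration on $\tilde{H}^1_{c,\bbC_p} = H^1(X,\calI)$, with the $H^1$-on-$\bbP^1$ contribution accounting for the Hodge-Tate-Sen weight zero piece and the $H^0$-on-$\bbP^1$ / $H^1$-on-$X_{\{0\}}$ dual picture of Remark~\ref{remark:dual-picture} accounting for the other weight, one obtains $w \in H$.

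To transfer this to $\rho_f$, absolute irreducibility of $\overline{\rho}_f$ implies that of $\rho_f$, and a Chebotarev argument in the prime-to-$p$ Hecke eigensystem $\frakp_f$ of $f$ shows that every nonzero $G_\bbQ$-subrepresentation of $\tilde{H}^{1,\locan}_{c,\bbC_p}[\frakp_f]$ is $\rho_f$-isotypic in semisimplification. The closed $G_\bbQ$-subrepresentation $W$ generated by $w$ therefore has the same $G_{\bbQ_p}$-Sen weights as $\rho_f|_{G_{\bbQ_p}}$. Since $W \cap H$ is $G_{\bbQ_p}$-stable and contains the nonzero $w$, the Sen operator has a nonzero kernel on $W$, and hence $0$ is a Hodge-Tate-Sen weight of $\rho_f$. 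Local-global compatibility identifies $\det \rho_f|_{G_{\bbQ_p}}$, up to standard normalization, with $\chi_{\cyc}^{-1}$ times the $G_{\bbQ_p}^{\ab}$-character attached to $\kappa$ via local class field theory; its Sen weight is $\Lie\kappa - 1$, forcing the other weight of $\rho_f$ to be $\Lie\kappa - 1$.

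The main obstacle is the geometric step: aligning the explicit Mayer-Vietoris representative of $w$ with the intrinsic Hodge-Tate filtration on $\tilde{H}^1_{c,\bbC_p}$. This compatibility is controlled by $\pi_{\HT}$ and is geometrically natural, but verifying it precisely for the cup-product construction — especially outside classical weights, where one pairs with the Verma module rather than an algebraic representation — is where I anticipate the main technical effort.
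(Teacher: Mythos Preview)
The main gap is that you miss why $w$ has Hodge--Tate weight zero. The argument is not geometric in the sense you envision: by definition $\HT_0^E(\tilde{H}^1_{c,E}) = (\tilde{H}^1_{c,E}\widehat{\otimes}_E\bbC_p)^{G_E}$ for the semilinear action, and Lemma~\ref{lemma:comparison} identifies this with $H^1(X,\calI)^{G_E}$. The entire cup-product construction---the Mayer--Vietoris cover by loci cut out by the $\bbQ_p$-rational coordinate $z$, the sheaf $\calO(\kappa)$ for $E$-valued $\kappa$, the form $f$ defined over $E$, and the highest weight vector $z^{-1}e_{\kappa^{-1}}$---is $G_E$-equivariant with $G_E$-fixed inputs, so $w$ is automatically $G_E$-fixed. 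That is the whole argument; the ``main obstacle'' you flag is a non-issue. Your second paragraph, attempting to derive weight zero from ``$\pi_{\HT}$ organizing the Hodge--Tate filtration'' via an $H^1$-on-$\bbP^1$ versus $H^0$-on-$\bbP^1$ dichotomy, is not the paper's route and is not available outside classical weights (the description in \S\ref{sss:intro-loc-alg} takes Faltings' and Emerton's computations as input, not the other way around). Your first paragraph already notes that $w$ lies in an $E$-rational subspace, but you fail to connect this to $G_E$-invariance and hence to $\HT_0^E$.

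Your transfer to $\rho_f$ via a Chebotarev argument and the closed $G_{\bbQ}$-subrepresentation generated by $w$ is also vaguer than what is needed. In the paper, absolute irreducibility of $\overline{\rho}$ is used to invoke a result of Emerton giving $\tilde{H}^1_{c,E}[\frakp]\cong V\otimes\rho$ with $G_{\bbQ_p}$ acting trivially on $V$; two short lemmas (Lemmas~\ref{lemma:set-of-operators} and~\ref{lemma:triv-ht-split}) then show that $\HT_0^E$ commutes with taking the $\frakp$-eigenspace and with tensoring by a trivial representation, yielding $\HT_0^E(\tilde{H}^1_{c,E})[\frakp]=V\otimes\HT_0^E(\rho)$, so $w\neq 0$ forces $\HT_0^E(\rho)\neq 0$ directly. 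Your determinant step for the second weight is correct.
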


\begin{remark} This result without the restriction $\Lie \kappa \neq 1$ and under the weaker hypothesis that $\rho$ be irreducible is shown in \cite[Theorem 1.0.7]{pan:locally-an}, \textit{along with a converse} (under some minor hypotheses). \end{remark}

\subsubsection{The Hodge-Tate weight zero part of locally algebraic vectors}\label{sss:intro-loc-alg}
\newcommand{\localg}{\mathrm{loc-alg}}
In the following, $K_p \subset \GL_2(\bbZ_p)$ is a compact open subgroup, and $V$ is the $\bbQ_p$-local system on the (open) modular curve $Y_{K_p}$ attached to the Tate module of the universal elliptic curve. By a computation of Faltings \cite{faltings:hodge-tate},
\[ H^1_{\et,c} (Y_{K_p}, (\Sym^k V)^* ) \otimes \bbC_p = S_{k+2}^{\cl, K_p}(1-k) \oplus (M_{k+2}^{\cl, K_p})^* \]
where $M_{k+2}^\cl$ is the space of classical modular forms, the superscript $K_p$ denotes invariants, and the parentheses denote Tate twists. Using the identification $\det V = \bbQ_p(1)$ and applying Emerton's \cite[Theorem 7.4.2]{emerton:local-global-conjecture} computation of locally algebraic vectors in $\tilde{H}^1_{c}$, we conclude that the Hodge-Tate weight zero part of $(\tilde{H}^1_{c})^\localg \otimes \bbC_p$ is identified with
\begin{equation}\label{eq:loc-alg-ht-0} \left(\bigoplus_{k \geq 0} S_{k+2}^\cl \otimes \left( (\Sym^k V)^*\otimes \det V \right)\right)   \oplus \left( \bigoplus_{k \geq 0} (M_{k+2}^{\cl})^\vee \otimes  \Sym^k V  \right). \end{equation}
If $V=\Gamma(\bbP^1, \calO(1))$, then Serre duality identifies 
\[ (\Sym^k V)^*\otimes \det V = H^1(\bbP^1, \calO(-k) \otimes \Omega_{\bbP^1}) \otimes \det V = H^1(\bbP^1, \calO(-k-2)),  \]
where here we have used the \emph{equivariant} identification $O(-2)=\Omega_{\bbP^1} \otimes \det V$. Thus, we may rewrite the first summand in (\ref{eq:loc-alg-ht-0}) as 
\[  \bigoplus_{k \geq 2} S_{k}^\cl \otimes H^1(\bbP^1, \calO(-k)). \]
This is exactly the part recovered by our cup product map as in Theorem \ref{maintheorem:ocmf-cup}. On the other hand, the second half can be rewritten as
\begin{equation}\label{eq:second-half-cup} \bigoplus_{k \geq 0} \colim_{K_p} H^1(X_{K_p}, \omega^{-k} \otimes \calI) \otimes H^0(\bbP^1, \calO(k)). \end{equation}
This can realized similarly as a cup product at infinite level where the roles of $H^1$ and $H^0$ are reversed between the infinite level modular curve and $\bbP^1$. 

\begin{remark}\label{remark:dual-picture}
One can also interpolate the cup products giving (\ref{eq:second-half-cup}) by pairing the local cohomology of $\omega^{\kappa}\otimes \calI$ on the ordinary locus of finite level modular curves with overconvergent sections on $\bbP^1$. Using a method similar to the proof of Theorem \ref{maintheorem:ocmf-cup} we can prove a non-degeneracy for these pairings, but unfortunately only in integral weight\footnote{Our method here hinges on having a section representing the local cohomology class that is defined on (some part of) the opposite ordinary locus -- in particular, to obtain such a representative, the sheaf itself must extend across the supersingular locus}.

Moreover, in integral weight any element in the kernel of the surjection from local cohomology to global cohomology is represented by a modular form on the complement, and after applying the non-trivial Weyl element we obtain an overconvergent modular form inducing the same map via the pairing considered in Theorem \ref{maintheorem:ocmf-cup} -- thus, outside of classical eigensystems, we see nothing new in classical weight, so we have not included the details. 

We note that these local cohomology groups on the modular curve side are essentially those studied by Boxer-Pilloni \cite{boxer-pilloni:higher-hida-modular}. Moreover, they are Serre-dual to overconvergent modular forms, and we expect that this pairing or a variant may fill in the missing geometric description of the space $M_{\mu,1}$ for non-integral weights appearing in the Shimura isomorphism described in \cite[Theorem 5.4.2]{pan:locally-an} (cf. also the paragraph following \cite[Theorem 1.0.1]{pan:locally-an}), completing the analogy with the classical Eichler-Shimura theory described there. 
\end{remark}

\subsubsection{Variant in classical weight}\label{subsec:variants-analyticity}

For classical weights, it is perhaps more  natural to replace $0$ with all of $\bbP^1(\bbQ_p)$ so that we obtain $\GL_2(\bbQ_p)$-equivariant cup products
\[ H^{0,\dagger}(X_{\bbP^1(\bbQ_p)}, \omega^k \otimes \calI)^\sm \otimes H^1_{\bbP^1(\bbQ_p)}(\bbP^1, \calO(-k))  \rightarrow \tilde{H}^1_{c, \bbC_p}. \]
These are related to our previous considerations in the following way: the obvious restriction map realizes $H^{0,\dagger}(X_{\bbP^1(\bbQ_p)}, \omega^k \otimes \calI)^\sm$ as the smooth induction of $S_k^\dagger$. Moreover, there is a natural inclusion 
\[ H^1_{\{0\}}(\bbP^1,  \calO(-k)) \hookrightarrow H^1_{\bbP^1(\bbQ_p)}(\bbP^1,  \calO(-k)),\]
and restricting to $H^1_{\{0\}}(\bbP^1, \calO(-k))$  factors through the map to $S_k^\dagger$ on the left. 

One can furthermore define dual pairings as in Remark \ref{remark:dual-picture} in this setting, and the spaces of overconvergent sections on $\bbP^1$ appearing in these dual pairings are related by an analytic local duality to the local cohomology groups $H^1_{\bbP^1(\bbQ_p)}(\bbP^1, \calO(-k))$, essentially as in work of Morita \cite[Section 5]{morita:anatlyic-discrete-ii}.

\subsection{Organization} In \S\ref{sec:prelim} we setup basic notation, introduce the overconvergent sheaves $\calO(\kappa)$ on $\bbP^1$, and study their local cohomology. In \S\ref{sec:modular} we recall some facts about overconvergent modular forms and establish the fundamental exact sequence (\ref{eqn:intro-fund-ses}). In \S\ref{sec:pairings} we construct the cup product pairings and prove most of Theorem~\ref{maintheorem:ocmf-cup}. Finally, in \S\ref{sec:hodge-tate} we discuss the comparison between completed cohomology and the cohomology of $\calI$ and verify that certain operations in representation theory commute with passage to the Hodge-Tate weight zero part, allowing us to complete the proof of Theorem \ref{maintheorem:ocmf-cup} and prove Corollary \ref{c:hts-weights}.  

\subsection{Acknowledgements}
We thank Ana Caraiani, Matt Emerton, and Kiran Kedlaya for helpful conversations and correspondence. We thank Lue Pan for sharing his preprint \cite{pan:locally-an} and correspondence about the relation with the present work. We thank an anonymous referee for their careful reading and helpful suggestions. This material is based upon work supported by the National Science Foundation under Award No. DMS-1704005.

\section{Constructions on $\bbP^1$}\label{sec:prelim}

\subsection{Conventions}
\subsubsection{Groups and representations}
We write $\GL_2$ for the algebraic group of invertible $2 \times 2$ matrices and $B \subset \GL_2$ for the subgroup of invertible upper-triangular matrices. Let 
\[ N_0 = \begin{bmatrix} 1 & \bbZ_p \\ 0 & 1 \end{bmatrix} \subset B(\bbQ_p). \]
We write $\gl_2$ (resp. $\mathfrak{b}$) for the Lie algebra of $\GL_2$ (resp. $B$) over $\bbQ_p$, and
\[ n_{+} = \begin{bmatrix} 0 & 1 \\ 0 & 0 \end{bmatrix},\; n_{-} = \begin{bmatrix} 0 & 0 \\ 1 & 0 \end{bmatrix} \]
for the raising and lowering operators, respectively, in $\gl_2$. A $(\gl_2, B(\bbQ_p))$-module is a barreled locally-convex Hausdorff $\bbQ_p$-vector space\footnote{For our purposes, we will only need Banach spaces, Fr\'{e}chet spaces, and colimits thereof. We will often be working with representations on vector spaces over a complete extension $E/\bbQ_p$, but, e.g., a representation on an $E$-Banach space is in particular a representation on a $\bbQ_p$-Banach space! In particular, even though $E$ will often be $\bbC_p$, we will never perform any operations that require caution over the fact that $\bbC_p$ is not spherically complete.} $V$ equipped with an action of $\gl_2$ and a locally analytic representation of $B(\bbQ_p)$ such that 
\begin{enumerate}
\item the two induced actions of $\mathfrak{b}$ on $V$ agree, and
\item for any $X \in \gl_2$, $b\in B(\bbQ_p)$, and $v \in V$,  $b \cdot X(v) = (bX b^{-1})(b \cdot v)$.   
\end{enumerate}
We refer the reader to \cite{schneider-teitelbaum:locally-an} for the notion of a locally analytic representation; however, the representations we consider are so explicit that no general theory is really necessary to make sense of the computations.

\subsubsection{Geometry}
In the following, $E$ is a complete extension of $\bbQ_p$ and $\bbP^1$ denotes the projective line over $E$, viewed as an adic space over $\Spa(E, \calO_E)$. We take the dual action on $\bbP^1$, so that $\GL_2$ acts via the standard representation on $\calO(1)$ in the basis $x, y$: 
\[ \textrm{for } \gamma = \begin{bmatrix}a & b \\ c & d\end{bmatrix}, \; \gamma \cdot x = ax+cy \textrm{ and } \gamma \cdot y = bx+dy. \]
Writing $z=x/y$ for the standard local coordinate at $0=[0:1] \in \bbP^1$, we have 
\begin{equation}\label{eq:action-on-z} \gamma \cdot z = \frac{az+c}{bz+d}. \end{equation}
We will denote subspaces of $\bbP^1$ defined using $|z|$ with subscripts, e.g. $\bbP^1_{|z| \leq 1/p}$ for the affinoid ball of radius $1/p$ around $0$. Note that when we write $|z| \leq 1/p^n$ what we really mean is $|z|\leq|p^n|$; that is, $1/p^n$ should be interpreted as lying in $|\bbQ_p^\times|$. 

\subsection{Overconvergent line bundles}
\subsubsection{Reduction of structure group}
The geometric torsor of bases for $\calO(-1)$ on $\bbP^1$ is the projection map
\begin{equation}\label{eq:torsor-projection} \bbA^2 \bs \{(0,0)\} \rightarrow \bbP^1. \end{equation}
We will consider the following reductions of structure group: for $\epsilon=1/p^n$, $n \geq 1$, we write $\bbZ_{p,\epsilon}$ for the affinoid $\epsilon$-neighborhood of $\bbZ_p$ in $\bbA^1$. In other words, $\bbZ_{p,\epsilon}$ is the disjoint union of affinoid disks $|z-k|\leq |p^n|$ of radius $1/p^{n}$ as $k$ varies over a set of representatives for the residue classes $\bbZ_p/p^n$. 

We write $\bbZ_{p,\epsilon}^\times$ for the units in $\bbZ_{p,\epsilon}$, or, equivalently, the affinoid $\epsilon$-neighborhood of $\bbZ_p^\times$. 
The restriction of (\ref{eq:torsor-projection}) to $\bbZ_{p,\epsilon} \times \bbZ_{p,\epsilon}^\times \subset \bbA^2\bs\{0\}$ is a geometric $\bbZ_{p,\epsilon}^{\times}$-torsor
\[ T_{\epsilon} \rightarrow \bbP^1_{|z| \leq \epsilon.} \]
It is equivariant for the action of $\Gamma_0(p^n) = \begin{bmatrix} \bbZ_p^\times & \bbZ_p \\ p^n \bbZ_p & \bbZ_p^\times \end{bmatrix} \subset \GL_2(\bbZ_p).$
The corresponding sheaf of sections $\calT_\epsilon \subset \calO(-1)$ is a sheaf-theoretic reduction of structure group for $\calO(-1)$ from $\bbG_m$ to $\bbZ_{p,\epsilon}^\times$. We fix a canonical trivializing section over $\bbP^1_{z \leq \epsilon}$:
\[ e: [z : 1]  \mapsto (z, 1). \]
These trivializations are compatible along the natural inclusions for $\epsilon' \leq \epsilon$, 
\[ \calT_{\epsilon'} \hookrightarrow \calT_{\epsilon}|_{\bbP^1_{z \leq \epsilon'}}. \]

\subsubsection{Overconvergent sheaves}
Fix $\kappa$ a continuous character of $\bbZ_{p}^\times$ valued in $E$. Any such character extends uniquely to $\bbZ_{p,\epsilon}^\times$ for $\epsilon$ sufficiently small. For such an $\epsilon$, we obtain a line bundle on $\bbP^1_{|z| \leq \epsilon}$ by pushing out via the reciprocal character $\kappa^{-1}$: 
\[ \calO(\kappa) := \calT_{\epsilon} \times_{\bbZ_{p,\epsilon}^\times, \kappa^{-1}} \calO \]
 (independent of the choice of $\epsilon$ up to canonical isomorphism).  The canonical trivialization $e$ of $\calT_{\epsilon}$ gives a canonical trivialization $e_\kappa = (e, 1)$ of $\calO(\kappa)$. When $\kappa$ is the character $z \mapsto z^{k}$, we have a canonical identification $\calO(\kappa)=\calO(k)|_{\bbP^1_{|z| \leq \epsilon}}$, and the section $e_\kappa$ of $\calO(\kappa)$ is identified with $y^k$. In general, 
\begin{equation}\label{eq:can-base-transf} \begin{bmatrix}a & b \\ c & d \end{bmatrix} \cdot e_\kappa = \kappa(bz+d) e_\kappa. \end{equation}

We note that $\calO(\kappa)$ has an obvious action of $\Gamma_0(p^n)$ for $n$ sufficiently large. In fact, the $B(\bbZ_p)$-action extends naturally to an action of $\begin{bmatrix} \bbQ_p^\times & \bbQ_p \\ 0 & \bbZ_p^\times \end{bmatrix}$ on the \emph{germ} of $\calO(\kappa)$ at $0$: if we fix $\epsilon$ sufficiently small to define $\omega^{\kappa}$ via the torsor $T_\epsilon$, then one can check that for any $\gamma$ in this group, there is an $\epsilon_\gamma \leq \epsilon$ such that the action of $\gamma$ on $\bbA^2 \bs \{0\}$ restricts to a map $T_{\epsilon}|_{z \leq \epsilon_{\gamma}} \rightarrow T_{\epsilon}$.  We extend this to an action of $B(\bbQ_p)$ by letting $\diag(p,p)$ act trivially.
\begin{remark} This essentially arbitrary choice for the action of $\diag(p,p)$ will not matter in any of our final results, though it is desirable to make a choice that assigns inverse values for the action of $\diag(p,p)$ on $\calO(\kappa)$ and $\calO(\kappa^{-1})$ so that the canonical identification
$\calO(\kappa)^* = \calO(\kappa^{-1})$ is $B(\bbQ_p)$-equivariant. Indeed, the choice is cancelled out in all of our main results because the representations appearing (cf. e.g., (\ref{eq:first-version-ocmf-pairing})), are always a tensor of one space of sections constructed from $\calO(\kappa)$ and another from $\calO(\kappa)^*=\calO(\kappa^{-1})$ -- thus, no matter what choice we make here, $\diag(p,p)$ will act trivially on the resulting representation.

For $\kappa=z^k$, it would be more natural in some ways to let $\diag(p,p)$ act by $p^k$ because then this would agree via restriction with the standard $\GL_2(\bbQ_p)$-equivariant structure on $\calO(k)$. However, there is no natural way to interpolate this choice for other $\kappa$, and in the end it is perhaps more useful to have uniform formulas by always taking the action to be trivial.  
\end{remark}

It will be useful to have an explicit formula for this action: in a sufficiently small neighborhood of $0$, if we write
\[ \calO(\kappa) = \calO_{\bbP^1} \cdot e_\kappa \]
then from (\ref{eq:action-on-z}), (\ref{eq:can-base-transf}), and the fact that $\diag(p,p)$ acts trivially, we find
\begin{align}  \begin{bmatrix}a & b \\ c & d\end{bmatrix} \cdot (f(z) e_\kappa) & = \left( \begin{bmatrix}a & b \\ c & d\end{bmatrix} \begin{bmatrix}|d| & 0 \\ 0 &|d| \end{bmatrix} \right)\cdot(f(z)e_\kappa) \\ & =  f\left( \frac{az + c}{bz+d} \right) \kappa(b|d| z + d|d|) e_\kappa \label{eq.action-formula-for-sections} \end{align}
whenever the final equation ``makes sense," that is, when $f$ and the locally analytic function $\kappa$ are defined on their inputs -- in particular, by inspection of the formula we find that for any fixed matrix in $B(\bbQ_p)$ and $f$ defined on a fixed ball around 0, the result is defined on some fixed  (potentially smaller) ball around 0, just as we knew already from the above conceptual interpretation of the action!

\subsection{Local cohomology representations}\label{subsub.local-cohomology}

\subsubsection{Recollections on local cohomology} 
For $Z \subset \bbP^1$ a closed set and $\calF$ a sheaf defined on an open neighborhood $U \supset Z$, we consider the local cohomology\footnote{or relative cohomology, or cohomology with supports, depending on your tastes!} group $H^1_{Z}(U, \calF)$ (for the analytic topology on $\bbP^1$). It fits into a functorial exact sequence
\begin{equation}\label{eq.local-cohomology-exact-sequence} 0 \rightarrow H^0(U, \calF)/H^0(U\bs Z, \calF) \rightarrow H^1_{Z}(U, \calF) \rightarrow H^1(U\bs Z, \calF). \end{equation}
For $Z \subset U' \subset U$ there is a natural restriction map 
\[ H^1_{Z}(U, \calF) \rightarrow H^1_{Z}(U', \calF) \]
and by the excision property it is an isomorphism (this will be obvious by direct computation in all cases we consider). Moreover, (\ref{eq.local-cohomology-exact-sequence}) is compatible with the restriction maps on all terms, and restriction satisfies the natural compatibilities with respect to $Z \subset U'' \subset U' \subset U.$ In particular, we can unambigulously write
\[ H^1_{Z}(\bbP^1, \calF) := H^1_Z(U, \calF) \]
for any choice of $U$ as above, and this abuse of notation makes sense even though $\calF$ may not be defined on all of $\bbP^1$ (in particular, we will apply this to the sheaves $\calO(\kappa)$ of the previous section). 

We note further that for $Z' \subset Z$, there is a corestriction map 
\[ \mathrm{cores}: H^1_{Z'}(\bbP^1, \calF) \rightarrow H^1_{Z}(\bbP^1, \calF) \]
and these corestrictions satisfy the obvious compatibility for $Z'' \subset Z' \subset Z.$ Moreover, for any choice of $U$ the exact sequence (\ref{eq.local-cohomology-exact-sequence}) is functorial for $Z' \subset Z \subset U$ via corestriction in the middle and the natural restriction maps on the other terms that appear. 

\subsubsection{Representations on local cohomology}
We now consider the local cohomology groups of the sheaves $\calO(\kappa)$ along the closed subsets $S=\{0\}$ and $S=|z| < \epsilon$ for $\epsilon=1/p^n$ sufficiently small. Here $|z|<\epsilon$ must be read in the language of adic spaces -- it is then the complement of the open affinoid $|z| \geq \epsilon$, thus closed. Its \emph{interior} is the adification of the rigid analytic ball defined by the same inequality, and the boundary is a single rank two point where $|z|$ is infinitesimally smaller than $|p^n|$. 

We state the main properties of these groups as a lemma; in the proof we will use (\ref{eq.local-cohomology-exact-sequence}) to make the groups themselves completely explicit.
\begin{lemma}\label{lemma.local-cohomology-properties}\hfill
\begin{enumerate} 
\item For $\epsilon=1/p^n$ sufficiently small, $H^1_{|z|<\epsilon}(\bbP^1, \calO(\kappa))$ is canonically an orthonormalizeable $E$-Banach space with a locally analytic action of $\Gamma_0(p^{n+1})$. 
\item For $\epsilon'=1/p^m<\epsilon=1/p^n$, the corestriction map
\[  H^1_{|z| < \epsilon'}(\bbP^1, \calO(\kappa)) \rightarrow H^1_{|z| < \epsilon}(\bbP^1, \calO(\kappa)) \]
is injective, completely continuous (i.e. a compact operator) and $\Gamma_0(p^{m+1})$-equivariant. In particular, it is a map of $(\gl_2, B(\bbZ_p))$-modules. 
\item The corestriction maps 
\[ H^1_{\{0\}}(\bbP^1, \calO(\kappa)) \rightarrow H^1_{|z|<\epsilon}(\bbP^1, \calO(\kappa)) \]
are injective and induce an isomorphism 
\begin{equation} \label{eq.local-cohomology-limit} H^1_{\{0\}}(\bbP^1, \calO(\kappa)) = \lim_{\epsilon} H^1_{|z|<\epsilon}(\bbP^1, \calO(\kappa)). \end{equation}
In particular, $H^1_{\{0\}}(\bbP^1, \calO(\kappa))$ is a nuclear Fr\'{e}chet space with the structure of a $(\gl_2, B(\bbZ_p))$-module. Moreover, the action of $B(\bbQ_p)$ on the germ of $\calO(\kappa)$ at $0$ extends this to the structure of a $(\gl_2, B(\bbQ_p))$ module. 
\end{enumerate}
\end{lemma}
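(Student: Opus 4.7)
The plan is to reduce (1)--(3) to an explicit computation of both local cohomology groups as spaces of Laurent tails, after which every topological and group-theoretic assertion follows from direct bookkeeping. I expect the main obstacle to be setting up the Banach/Fr\'{e}chet norms carefully so that orthonormalizability in (1) and the compactness of the transition maps in (2) both fall out cleanly; the construction of a compatible $\gl_2$-action at the end is also a nuisance, but is essentially forced by the formulas.

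I would begin by fixing an affinoid disk $U = \bbP^1_{|z| \leq \delta}$ containing the relevant closed subset $Z$. Since $U$ is affinoid and $\calO(\kappa)$ is a line bundle, $H^i(U, \calO(\kappa)) = 0$ for $i \geq 1$, and the analogous vanishing holds on the quasi-Stein complement $U \setminus \{0\}$. Applying (\ref{eq.local-cohomology-exact-sequence}) then gives $H^1_Z(\bbP^1, \calO(\kappa)) = H^0(U \setminus Z, \calO(\kappa))/H^0(U, \calO(\kappa))$. Using the trivialization $e_\kappa$ and expanding sections as Laurent series in $z$, this yields the identifications
\[
H^1_{|z|<\epsilon}(\bbP^1, \calO(\kappa)) = \left\{ \textstyle\sum_{k \geq 1} a_{-k} z^{-k} e_\kappa \; : \; |a_{-k}|\epsilon^{-k} \to 0 \right\},
\]
\[
H^1_{\{0\}}(\bbP^1, \calO(\kappa)) = \left\{ \textstyle\sum_{k \geq 1} a_{-k} z^{-k} e_\kappa \; : \; |a_{-k}|^{1/k} \to 0 \right\}.
\]
The first is a Banach space with orthonormal basis $\{\epsilon^k z^{-k} e_\kappa\}_{k \geq 1}$, and the second is the intersection of these over all $\epsilon > 0$.

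For the group action in (1), a direct calculation with (\ref{eq:action-on-z}) shows that $\Gamma_0(p^{n+1})$ preserves both $\bbP^1_{|z|<\epsilon}$ and its complement when $\epsilon = 1/p^n$ (the strict inequality requires $c \in p^{n+1}\bbZ_p$ rather than merely $p^n\bbZ_p$), so the action on sections descends to local cohomology. Local analyticity of this action is then visible directly from (\ref{eq.action-formula-for-sections}) combined with local analyticity of $\kappa$. For (2), injectivity of corestriction is clear from the Laurent-tail description; for complete continuity, I would observe that, with respect to the orthonormal bases, the inclusion $H^1_{|z|<1/p^m} \hookrightarrow H^1_{|z|<1/p^n}$ (for $m > n$) is diagonal, sending the $k$th source basis element to $p^{-(m-n)k}$ times the $k$th target basis element, hence is compact. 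Equivariance for $\Gamma_0(p^{m+1})$ is immediate since both actions are induced from the same action on sections of $\calO(\kappa)$.

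Finally, for (3), the identification $H^1_{\{0\}} = \lim_\epsilon H^1_{|z|<\epsilon}$ is transparent from the Laurent-tail descriptions via the root test, and injectivity of each corestriction $H^1_{\{0\}} \to H^1_{|z|<\epsilon}$ follows. Nuclearity of the Fr\'{e}chet limit is then automatic from compactness of the transition maps established in (2). To equip each term with its $(\gl_2, B(\bbZ_p))$-structure, I combine the already-described $B(\bbZ_p)$-action with the canonical $\gl_2$-action on Laurent tails in which $n_-$ acts as $\partial_z$ and $n_+$ as a twisted quadratic operator determined by $\Lie(\kappa)$; compatibility of the two induced $\mathfrak{b}$-actions is a short calculation using (\ref{eq.action-formula-for-sections}). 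The inverse limit then inherits the same structure. To extend to a $(\gl_2, B(\bbQ_p))$-module, I use that $H^1_{\{0\}}(\bbP^1, \calO(\kappa))$ depends only on the germ of $\calO(\kappa)$ at $0$, and this germ already carries the $B(\bbQ_p)$-action introduced earlier in the section.
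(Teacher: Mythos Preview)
Your proposal is correct and follows essentially the same route as the paper: both identify the local cohomology groups with spaces of Laurent tails via the sequence (\ref{eq.local-cohomology-exact-sequence}), read off the Banach/Fr\'echet structure and compactness of corestriction directly from these descriptions, and verify the group actions via the explicit formula (\ref{eq.action-formula-for-sections}). One minor simplification you might adopt: rather than defining the $\gl_2$-action by hand on Laurent tails and then checking the $(\gl_2,B(\bbZ_p))$-compatibility axioms, observe (as the paper implicitly does) that it is the derived action of the locally analytic $\Gamma_0(p^{n+1})$-action, so both compatibility conditions with $B(\bbZ_p)\subset\Gamma_0(p^{n+1})$ are automatic.
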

\begin{remark}
The dual of the nuclear Fr\'{e}chet space $H^1_{\{0\}}(\bbP^1, \calO(\kappa))$ is canonically identified with the space of compact type $H^{0,\dagger}(\{0\}, \calO(\kappa^{-1}))$ via the residue pairing (cup to $H^1_{\{0\}}(\bbP^1, \calO)$ then pass to $H^1(\bbP^1, \calO)=E$).  
\end{remark}

\begin{proof}

For $\epsilon_0$ sufficiently small and $\epsilon \leq \epsilon_0$, (\ref{eq.local-cohomology-exact-sequence}) with $U=\bbP^1_{ |z| \leq \epsilon_0}$ gives
\begin{equation}\label{eq.ball-local-computation}
H^1_{|z| < \epsilon}(\bbP^1, \calO(\kappa))=	H^0(\bbP^1_{\epsilon \leq |z|\leq \epsilon_0}, \calO(\kappa)) / H^0(\bbP^1_{|z| \leq \epsilon_0}, \calO(\kappa)). \end{equation}
Indeed, $\calO(\kappa)$ is a line bundle, so its $H^1$ vanishes on the affinoid $\bbP^1_{\epsilon \leq |z| \leq \epsilon_0}.$ 

Using that $\calO(\kappa)=\calO_{\bbP^1} \cdot e_\kappa$, we see that
\[ H^0(\bbP^1_{\epsilon \leq |z|\leq \epsilon_0}, \calO(\kappa)) = \left\{ \left(\sum_{k \in \bbZ} a_k z^k \right) e_\kappa \;  |\, \lim_{k \rightarrow \infty,\, k\geq 0} |a_k| \epsilon_0^k  = 0 \textrm{ and } \lim_{k \rightarrow -\infty,\, k< 0} |a_k| \epsilon^k =0 \right\}.\]
This is an orthonormalizeable Banach space with norm given by the sup over the terms appearing in the two limits. 
The subspace $H^0(\bbP^1_{|z| \leq \epsilon_0}, \calO(\kappa))$ is closed; indeed, it is given by those power series with $a_k=0$ for $k<0$. The quotient $H^1_{|z| < \epsilon}(\bbP^1, \calO(\kappa))$ is thus also a Banach space, and the induced norm can be computed on any representative 
\[ \left(\sum_{k \in \bbZ} a_k z^k \right) e_\kappa \]
as $\sup_{k < 0} |a_k| \epsilon^k.$ In particular, this norm is independent of the choice of $\epsilon_0$, and we also immediately obtain that the corestriction maps for $\epsilon' < \epsilon$ are compact. 

The $\Gamma_0(p^{n+1})$ action for $\epsilon=1/p^{n}$ exists because $\Gamma_0(p^{n+1})$ preserves the ball $|z| < \epsilon$, and it is an immediate computation from (\ref{eq.action-formula-for-sections}) that it is locally analytic (using the  local analyticity of $\kappa$). The equivariance for corestriction is immediate, so we have now established points \emph{1.} and \emph{2.} in the statement of the lemma. 

To prove \emph{3.}, we again apply (\ref{eq.local-cohomology-exact-sequence}) with $U=\bbP^1_{ |z| \leq \epsilon_0}$ to see
\[  H^1_{\{0\}}(\bbP^1, \calO(\kappa))=	H^0(\bbP^1_{0 < |z|\leq \epsilon_0}, \calO(\kappa)) / H^0(\bbP^1_{|z| \leq \epsilon_0}, \calO(\kappa)). \]
Indeed, $\calO(\kappa)$ is a line bundle, so its $H^1$ vanishes on the quasi-Stein $\bbP^1_{0 < |z| \leq \epsilon_0}.$ 

The limit formula (\ref{eq.local-cohomology-limit}) is then immediate by comparing with (\ref{eq.ball-local-computation}) and using the trivial identity 
\[ H^0(\bbP^1_{0 < |z|\leq \epsilon_0}, \calO(\kappa)) = \lim_{\epsilon} H^0(\bbP^1_{\epsilon \leq |z|\leq \epsilon_0}, \calO(\kappa)). \]
It remains only to check that the $B(\bbQ_p)$ action induced by the action on the germ satisfies the correct compatibility with the $\gl_2$-action under the adjoint action, and this can be verified, e.g., with the explicit formula (\ref{eq.action-formula-for-sections}). 
\end{proof}

\subsubsection{The Verma module}\label{subsub.verma-module}
The computations in the proof of Lemma \ref{lemma.local-cohomology-properties} show that $H^1_{0}(\bbP^1, \calO(\kappa))$ can be unambiguously identified with the the set of tails
\[ \left\{\left( \sum_{k<0} a_k z^k \right) e_\kappa \; | \, \forall t>0, \lim_{k\rightarrow \infty} |a_k|t^k = 0  \right\}. \]
In this presentation, the Fr\'{e}chet topology is given by the norms for $t>0$
\[ || \left( \sum_{k<0} a_k z^k \right) e_\kappa ||_t = \sup |a_k|t^k. \] 
The actions of $\gl_2$ and $B(\bbQ_p)$ are then obtained by applying (\ref{eq.action-formula-for-sections}) to compute naively and then truncating the resulting Laurent series back to its tail. In particular, short computations show
\begin{enumerate}
\item $n_{-}$ acts as $\frac{d}{dz}$,
\item $z^{-1} e_\kappa$ is a highest weight vector of weight $(-1, \Lie \kappa + 1)$, and
\item the action of $B(\bbQ_p)$ preserves the set of tails truncated at any finite power. 
\end{enumerate}
It follows that the subspace $V_{\kappa}$ of \emph{meromorphic} tails is a $(\gl_2, B(\bbQ_p))$-module (topologized with the colimit topology over the finite truncations) whose underlying $\gl_2$-representation is the Verma module of highest weight $(-1, \Lie \kappa + 1)$. In particular, when  $\Lie \kappa=k \in \bbZ_{\leq -2}$, it admits an algebraic quotient and the kernel $V_\kappa'$ has highest weight vector $z^{k} e_\kappa$ of weight $(k, 0)$.  For $\kappa=z^k$, $k \leq -2$, $V_{\kappa}$ is the \emph{algebraic} local cohomology group, the quotient is $H^1(\bbP^1,\calO(k))$, and the kernel $V_\kappa'$ is $H^0_{\mathrm{alg}}(\bbP^1 \bs\{0\}, \calO(k))$  with $z^k e_\kappa= x^k$. 

\section{Constructions on modular curves}\label{sec:modular}

Recall from \S\ref{ss:summary} that $X$ is the perfectoid infinite level compactified modular curve of prime-to-$p$ level $K^p$ of \cite{scholze:torsion}. It admits a Hodge-Tate period map
\[ \pi_{\HT}: X \rightarrow \bbP^1 \]
such that over the open perfectoid modular curve $Y$, the pullback of 
\[ 0 \rightarrow \calO(-1)\otimes \det \rightarrow \Gamma( \calO(1) ) \otimes \calO \rightarrow \calO(1) \rightarrow 0 \]
is $\GL_2(\bbQ_p)$-equivariantly identified with the Hodge-Tate sequence for the Tate module of the universal elliptic curve, 
\begin{equation}\label{eq:hodge-tate-ext} 0 \rightarrow \omega^{-1}(1) \rightarrow \calO^2 \rightarrow \omega \rightarrow 0. \end{equation}
Here we use the canonical trivialization of the Tate module at infinite level, so that the $GL_2(\bbQ_p)$-action on the middle term is via the standard representation.

\begin{remark}
The notation $\calO(k)$ on $\bbP^1$ refers to the standard line bundles, whereas $\omega^{-1}(i)$ on $X$ denotes a Tate twist of the sheaf $\omega^{-1}$ on $X$. This should not cause any confusion in what follows, as we will never use Tate twists on $\bbP^1$. Instead, Tate twists can be replaced on $\bbP^1$ by twisting the $\GL_2(\bbQ_p)$-action via the determinant because, on $X$, $\bbQ_p(1)$ has a trivialization transforming via the determinant under $\GL_2(\bbQ_p)$ -- indeed, $\bbQ_p(1)$ is the determinant of the Tate module of the universal elliptic curve.
\end{remark}

\subsection{Overconvergent modular forms}
As in the introduction, we define 
\[ \omega^{\kappa} = \pi_{\HT}^{-1}\calO(\kappa) \textrm{ and } S_\kappa^\dagger = H^{0,\dagger}(X_{\{0\}}, \omega^{\kappa} \otimes \calI)^\sm. \]
The action of $B(\bbQ_p)$ on the germ of $\calO(\kappa)$ at $0$ equips $S_\kappa^\dagger$ with a natural $(\gl_2, B(\bbQ_p))$-module structure, with $\gl_2$ acting trivially. 
 
We briefly recall (cf. \cite{chojecki-hansen-johannson:ocmf, howe:thesis, birkbeck-heuer-williams:hilbert}) why the $B(\bbZ_p)$-invariants in $S_\kappa^\dagger$ agree with the classical definition of overconvergent modular forms of weight $\kappa$: to show the line bundles $\omega^\kappa$ descend to finite level, one first observes that the torsor of bases $\calT_{\epsilon}|_{X_{|z| \leq \epsilon}}$ has a $K_p$-invariant section for $K_p \subset \Gamma_0(p^n)$ sufficiently small, which follows from the density of $\calO_X(X_{|z| \leq \epsilon})^\sm$ in $\calO(X_{|z| \leq \epsilon})$ established in \cite[Theorem 3.1.2-(iii)]{scholze:torsion}. This allows one to descend $\calT_\epsilon$ to an \'{e}tale\footnote{it is only \'{e}tale because the $K_p$ with an invariant section may not equal $\Gamma_0(p^n)$} torsor away from a neighborhood of the boundary in the image of $X_{|z| \leq \epsilon}$ in the modular curve of finite level $\Gamma_0(p^n)K^p$. One argues by hand to extend to the cusps, as over the canonical component of the ordinary locus $e_\kappa$ agrees with the standard Katz trivialization. The resulting torsor at finite level can now be pushed out to give a descent of $\omega^\kappa$, and the overconvergent sections along the canonical ordinary locus will be exactly the $B(\bbZ_p)$-invariants in $S_\kappa^\dagger$; the choice of the specific $n$ in $\Gamma_0(p^n)$ is irrelevant here as passing to a sufficiently small $\epsilon$ allows one to move between $n$ (classically via overconvergence of the canonical subgroup, or from this perspective just using that the orbit of $|z| \leq \epsilon$ under any $\Gamma_0(p^n)$ is a disjoint union of translates of the ball).

At this point, one can, e.g., compare with the construction of Pilloni \cite{pilloni:ocmf}, which carries out the same idea at finite level to reduce the structure group mod $p^n$ for the Hodge-Tate integral structure. 

\subsection{The fundamental Mayer-Vietoris sequence}

Recall that $\calI$ denotes the ideal sheaf of the boundary (cusps) $\partial X$ of $X$. To obtain the quasi-Stein version of the Mayer-Vietoris exact sequence (\ref{eqn:intro-fund-ses}) we first establish a vanishing lemma:

\begin{lemma}\label{lemma:vanishing} For any $i \geq 1$ and any $\delta=1/p^n > 0$, 
\[ H^i(X_{|z| > 0}, \calI) = 0 \textrm{ and } H^i(X_{\delta \geq |z| > 0}, \calI)=0  \]
\end{lemma}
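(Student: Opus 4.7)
The plan is to exhaust both spaces by affinoid perfectoid subspaces and apply a standard quasi-Stein vanishing argument. Concretely, I would write
$$X_{|z|>0} = \bigcup_{m \geq 1} X_{|z| \geq 1/p^m} \qquad \text{and} \qquad X_{\delta \geq |z|> 0} = \bigcup_{m > n} X_{\delta \geq |z| \geq 1/p^m},$$
with both exhaustions indexed so that the $U_m$ are ascending. The subspaces $X_{|z| \geq 1/p^m}$ are affinoid perfectoid, as already noted in the discussion of the Mayer--Vietoris cover in the introduction; the annular regions $X_{\delta \geq |z| \geq 1/p^m}$ are then also affinoid perfectoid, being rational subsets of the ambient $X_{|z| \geq 1/p^m}$.

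The next step is to verify $H^i(U,\calI) = 0$ for $i \geq 1$ on each affinoid perfectoid $U$ in these exhaustions. I would use the short exact sequence
$$0 \to \calI \to \calO_X \to i_* \calO_{\partial X} \to 0$$
attached to the closed boundary $\partial X \subset X$. Scholze's work gives the vanishing of higher cohomology of $\calO_X$ on affinoid perfectoids, and the higher cohomology of $i_*\calO_{\partial X}$ on $U \cap \partial X$ vanishes since, at infinite level, the cusps have an explicit perfectoid structure (via the Tate parametrization) in which $\partial X \cap U$ is a disjoint union of perfectoid points. The needed surjectivity on global sections in the long exact sequence follows because $\partial X \cap U$ is a Zariski-closed perfectoid subspace cut out by an ideal of global functions on $U$.

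Finally, I would invoke the quasi-Stein vanishing principle: for an ascending union $U = \bigcup U_m$ of affinoid subspaces on which a sheaf $F$ has vanishing higher cohomology and for which the restriction maps $H^0(U_{m+1},F) \to H^0(U_m,F)$ have dense image, one has $H^i(U,F) = 0$ for $i \geq 1$ (the $H^1$ vanishing is a Mittag-Leffler/$\lim^1$ statement, and higher degrees follow from a standard \v{C}ech-theoretic argument). This reduces matters to verifying the density, which I would establish directly by noting that sections of $\calI$ on $U_m$ can be expanded in the $z$-coordinate along rational neighborhoods of fibers of $\pi_{\HT}$, and that suitable finite truncations of such expansions extend to $U_{m+1}$ while preserving the vanishing condition at cusps.

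The main obstacle I expect is the bookkeeping around the cusps: both in rigorously identifying $\partial X \cap U$ as a disjoint union of perfectoid points on which global functions from $U$ surject, and in ensuring that the density argument is compatible with the ideal-sheaf condition. Everything else is formal once these cusp-side details are pinned down; the rigid-analytic analog of the whole argument is classical, and only the affinoid perfectoid inputs from \cite{scholze:torsion} are genuinely needed to upgrade it.
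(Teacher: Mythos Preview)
Your overall architecture matches the paper's proof exactly: exhaust by affinoid perfectoids, use the short exact sequence for the boundary to kill higher cohomology on each piece (the paper packages your surjectivity-at-cusps step as the statement that $\partial X$ is \emph{strongly} Zariski closed), and then run a quasi-Stein argument reducing $H^1$-vanishing to an $R^1\lim$ computation controlled by density of the restriction maps on $H^0$. The one substantive divergence is in how you propose to establish that density.

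The paper argues as follows: $U_m = X_{|z| \geq 1/p^m}$ is a Weierstrass subdomain of $U_{m+1}$ (cut out by $|p^m z^{-1}| \leq 1$), so the restriction $A_{m+1} \to A_m$ on full structure sheaves has dense image by general principles; then \cite[Lemma 2.2.9-(ii)]{scholze:torsion} transfers this density from $\calO$ to the ideal sheaf $\calI$ of the Zariski-closed boundary. The cusp-side compatibility you were worried about is thus absorbed into a single citation.

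Your proposed route---expanding sections of $\calI$ in the pulled-back coordinate $z$ along rational neighborhoods of fibers of $\pi_{\HT}$ and truncating---does not work as stated. Functions on $X_{|z| \geq 1/p^m}$ are not determined by any Laurent expansion in the single function $z$: the fibers of $\pi_{\HT}$ over this region include the entire supersingular locus (mapping to Drinfeld's $\Omega$) as well as other ordinary components, and there is no product-type decomposition that would let you separate a ``fiber coefficient'' from a power of $z$. You correctly flagged density as the main obstacle, but the resolution is not combinatorial bookkeeping at the cusps; it is the Weierstrass-subdomain density plus Scholze's lemma on ideals of Zariski-closed subsets, after which the explicit $R^1\lim$ computation goes through exactly as in the classical quasi-Stein case.
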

\begin{proof}
Arguing as in \cite[Theorem 3.1.2-(iii)]{scholze:torsion}, we find that $X_{|z| \geq \epsilon}$ is affinoid perfectoid for any $\epsilon=1/p^N>0$. Indeed: there is some neighborhood $U$ of $\infty$ such that $X_U$ is affinoid perfectoid, and we can spread out to obtain an affinoid perfectoid containing $X_{|z| \geq \epsilon}$ using the action of $\diag(p,1)^{\bbZ}$; then $X_{|z| \geq \epsilon}$ is affinoid perfectoid as a rational subdomain. Similarly, we find $X_{\delta \geq |z| \geq \epsilon}$ is affinoid perfectoid for $\delta \geq \epsilon$ as it is again a rational subdomain. 

With this established, the arguments for the two statements are essentially the same, so we just show that $H^i(X_{|z|>0}, \calI)=0$. Write $U_n= X_{|z| \geq 1/p^n}$, an affinoid perfectoid. Because $\partial X$ is strongly Zariski closed\footnote{This is shown in \cite{scholze:torsion}, but can also be verified by explicit computation for the modular curve, or as a general consequence of \cite[Remark 7.5]{bhatt-scholze:prismatic}, which shows Zariski closed always implies strongly Zariski closed.}, we have an exact sequence
\[ 0 \rightarrow \calI \rightarrow \calO_X \rightarrow \calO_{\partial X} \rightarrow 0 \]
which is moreover exact after evaluation on an affinoid perfectoid. Because $\calO_X$ and $\calO_{\partial X}$ both have vanishing higher cohomology on affinoid perfectoids (the intersection of an affinoid perfectoid in $X$ with $\partial X$ is affinoid perfectoid in the perfectoid space $\partial X$), the long exact sequence of cohomology gives 
\[ H^i(U_n, \calI) = 0 \textrm{ for } i \geq 1. \]

We now carry out the standard argument to bootstrap up to vanishing on the quasi-Stein space $X_{|z| >0} = \bigcup_{i=1}^{\infty} U_i$ (cf. e.g. \cite[Theorem 2.6.5]{kedlaya-liu:relativeII}\footnote{Unfortunately, we cannot just invoke this result, as the ideal sheaf is not pseudocoherent.}). Because the higher cohomology vanishes on $U_n$, we can compute $H^i(X_{|z|>0}, \calI)$ using the \v{C}ech complex for the cover $\{ U_i \}_{i \geq 1}$. This \v{C}ech complex is the limit of the \v{C}ech complexes for $\{ U_i \}_{1 \leq i \leq n}.$ Moreover, the transition maps from $n+1$ to $n$ are surjective in each degree, so the derived limit of this sequence of complexes is the limit (cf. \cite[Tag 091D]{stacks-project}).
The \v{C}ech complex for $\{U_i\}_{1 \leq i \leq n}$ has vanishing higher cohomology (because it computes the cohomology of $\calI$ on $U_n$), thus, applying \cite[Tag 0CQE]{stacks-project}, we conclude that $H^i(X_{|z|>0}, \calI) = 0$ for $i \geq 2$ and 
\[ H^1(X_{|z|>0}, \calI)= R^1 \lim H^0(U_n, \calI). \]

It remains to show this $R^1\lim$ is zero. We write $I_n= H^0(U_n, \calI)$, $I^+_n = H^0(U_n, \calI^+)$,  $A_n=H^0(U_n, \calO)$, and $A_n^+=H^0(U_n, \calO^+)$.  Then $A_n$ is a $\bbC_p$-Banach space with unit ball $A_n^+$, and $I_n$ is a closed subspace, thus a Banach space with unit ball $I_n^+$ (which is equal to $I_n \cap A_n^+$). Because $U_n$ is a Weierstrass subdomain of $U_{n+1}$, the image of $A_{n+1}$ in $A_n$ is dense. It follows from \cite[Lemma 2.2.9-(ii)]{scholze:torsion} that the map $I_{n+1} \rightarrow I_n$ also has dense image. We write  $||\cdot||_n$ for the Banach norm on $I_n$ with unit ball $I_n^+$. We note that for $f  \in I_{n+1} \subset I_n$, $||f||_{n} \leq ||f||_{n+1}$. 

As usual, the first derived limit is computed as the cokernel of the map 
\[ \psi: \prod_{n \geq 1} I_n  \rightarrow \prod_{n \geq 1} I_n, \; (f_n)_n \mapsto (f_n - f_{n+1})_n,\]
so we must show that $\psi$ is surjective. Thus, suppose $(y_n)_n \in \prod_{n \geq 1} I_n$. By density of $I_{n+1}$ in $I_n$, we can inductively choose $x_1 =0$, then $x_2$ such that $|| y_1 - x_2 ||_1 \leq 1/p$, then $x_3$ such that $||y_2 + x_2 - x_3 ||_2 \leq 1/p^2$, etc., so that 
\[ \psi( (x_n)_n ) - (y_n)_n \in \prod_{n\geq 1} B_{1/p^n}(I_n) \]
where $B_{1/p^n}(I_n)$ denotes the ball $|| f ||_n \leq 1/p^n$ in $I_n$. Thus it suffices to show this product is in the image of $\psi$. But, given 
\[ (y_n) \in \prod_{n\geq 1} B_{1/p^n}(I_n) \]
one can construct an explicit inverse $(x_n)$ by setting
\[ x_n = y_n + y_{n+1} + y_{n+2} + \ldots \]
which clearly converges since for any $m \geq n$, $||y_m||_n \leq ||y_m||_m\leq 1/p^m$.  

\end{proof}

\begin{remark} The proof of Lemma \ref{lemma:vanishing} applies to the ideal sheaf of any Zariski closed subset of a perfectoid space which can be written as an increasing union of affinoid perfectoids $U_n$ with $U_{n}$ a Weierstrass localization of $U_{n+1}$. 
\end{remark}

Applying Lemma \ref{lemma:vanishing}, we find that for any $\epsilon=1/p^n > 0$, we can compute $H^1(X, \calI)$ using the \v{C}ech sequence for the cover by $X_{|z| \leq \epsilon}$ and $X_{|z| > 0}$. This is simply a Mayer-Vietoris sequence, and, taking the colimit over $\epsilon \rightarrow 0$ gives
\begin{equation}\label{eq:fund-ex-seq} 0 \rightarrow \substack{ H^{0,\dagger}(X_{\{0\}}, \calI) \\ \oplus H^0(X_{|z| >0 }, \calI)} \rightarrow \colim_{\epsilon>0} H^0(X_{0 < |z| \leq \epsilon}, \calI) \rightarrow H^1(X, \calI) \rightarrow 0. \end{equation}

\section{Pairings}\label{sec:pairings}

\subsection{Global cup products}
For any $\kappa$, there is an obvious pairing
\[ \langle, \rangle: \left(\omega^{\kappa} \otimes \calI\right) \otimes \omega^{\kappa^{-1}} \rightarrow \calI. \]
When $\kappa = z^k$, then combining this with the identification $\omega^k = \pi_{\HT}^{-1}(\calO(k))$ and pullback of sections, we obtain global $\GL_2(\bbQ_p)$ and prime-to-$p$ Hecke-equivariant cup product maps
\begin{equation}\label{eqn:global-cup} H^0(X, \omega^k \otimes \calI)^\sm \otimes H^1(\bbP^1, \calO(-k)) \rightarrow H^1(X, \calI)  \end{equation}
and
\begin{equation} H^1(X, \omega^k \otimes \calI )^\sm \otimes H^0(\bbP^1, \calO(-k)) \rightarrow H^1(X, \calI). \label{eqn:other-global-cup} \end{equation}

The $\GL_2(\bbQ_p)$-representations on the domains of (\ref{eqn:global-cup}) and (\ref{eqn:other-global-cup}) are locally algebraic by construction, thus both maps factor through the locally algebraic vectors for the $\GL_2(\bbQ_p)$-action on $H^1(X, \calI).$ 

\subsection{Local cup product and compatibility}\label{ss:local-cup-compat}
We define a pairing
\begin{equation}\label{eq:ocmf-cup} S_\kappa^\dagger \otimes H^1_{\{0\}}(\bbP^1, \calO(\kappa^{-1})) \rightarrow H^1(X, \calI) \end{equation}
by sending $f \otimes c$ to $[ \langle f, \pi_\HT^*g \rangle ]$, where $g$ is any representative of the class $c$ in  
\begin{equation}\label{eq.local-cohom-surjection} \colim_{\epsilon>0} H^0(\bbP^1_{0 < |z| \leq \epsilon}, \calO(\kappa^{-1})) \twoheadrightarrow H^1_{\{0\}}(\bbP^1, \calO(\kappa^{-1})), \end{equation}
and the square brackets indicate application of the map
\[ \colim_{\epsilon>0} H^0(X_{0 < |z| \leq \epsilon}, \calI) \twoheadrightarrow H^1(X, \calI) \]
appearing in (\ref{eq:fund-ex-seq}).

This is well-defined because for 
 \[ h \in H^{0,\dagger}(\{0\}, \calO(\kappa^{-1})) = \colim_{\epsilon>0} H^0(\bbP^1_{|z| \leq \epsilon}, \calO(\kappa^{-1})),\]
the kernel of (\ref{eq.local-cohom-surjection}), the section $\langle f, \pi_{\HT}^* h \rangle$ extends to an element of $H^{0,\dagger}(X_{\{0\}}, \calI)$ and thus maps to zero in $H^1(X, \calI)$ (cf. (\ref{eq:fund-ex-seq})). 

Recall that we defined in \ref{subsub.verma-module} a Verma module $V_{\kappa^{-1}} \subset H^1_{\{0\}}(\bbP^1, \calO(\kappa^{-1}))$, and, when $\kappa=k \geq 2$, a submodule $V_{-k}' \subset V_{-k}$.

\begin{lemma}\label{lemma:cup-classical-zero}\hfill
\begin{enumerate}
\item For $k \in \bbZ_{\geq 2}$, the restriction of (\ref{eq:ocmf-cup}) to $ S_k^\cl \otimes V_{-k}'$ is zero. 
\item For $k=1$, the restriction of (\ref{eq:ocmf-cup}) to $ S_k^\cl \otimes V_{-1} $
is zero. 
\end{enumerate}
\end{lemma}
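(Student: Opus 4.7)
The plan is to exploit the Mayer-Vietoris sequence (\ref{eq:fund-ex-seq}) directly. Recall that the pairing (\ref{eq:ocmf-cup}) sends $f \otimes c$ to the class in $H^1(X, \calI)$ of $\langle f, \pi_{\HT}^* g \rangle \in \colim_\epsilon H^0(X_{0<|z|\leq\epsilon}, \calI)$ for any representative $g$ of $c$ in $\colim_\epsilon H^0(\bbP^1_{0<|z|\leq\epsilon}, \calO(\kappa^{-1}))$. By (\ref{eq:fund-ex-seq}), any element of this colimit that lifts to $H^0(X_{|z|>0}, \calI)$ maps to zero in $H^1(X,\calI)$. So it suffices to produce, for each $c$ under consideration, a representative $g$ that extends to a \emph{global} section of $\calO(\kappa^{-1})$ on $\bbP^1 \setminus \{0\}$: since $f$ is classical and thus a global section of $\omega^k \otimes \calI$ on $X$, the cup product $\langle f, \pi_{\HT}^* g \rangle$ will automatically extend to a section in $H^0(X_{|z|>0}, \calI)$, killing the class in $H^1(X,\calI)$.

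Both parts of the lemma then reduce to identifying such global representatives. For part (1), the description in \ref{subsub.verma-module} already identifies $V_{-k}'$ with $H^0_{\mathrm{alg}}(\bbP^1 \setminus \{0\}, \calO(-k))$, so every $c \in V_{-k}'$ is tautologically represented by a global algebraic section. For part (2), one must verify the same extension property for the \emph{entire} Verma $V_{-1}$, which is spanned as a $\bbQ_p$-vector space by the meromorphic monomials $z^j e_{z^{-1}}$ with $j \leq -1$. Using the identification $e_{z^{-1}} = y^{-1}$, a direct computation gives
\[ z^j e_{z^{-1}} \;=\; x^j y^{-1-j}, \]
and since $-1-j \geq 0$ for $j \leq -1$, this is a rational section of $\calO(-1)$ with poles only along $\{x=0\}$, hence regular on all of $\bbP^1 \setminus \{0\}$. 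So every element of $V_{-1}$ admits a global representative and the strategy applies.

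The argument is essentially formal once one has the Mayer-Vietoris sequence in hand, and I anticipate no serious obstacle; the only genuine input is the degree count $-1-j \geq 0$ for $j \leq -1$. This is also what qualitatively distinguishes $k=1$ from $k \geq 2$: at weight $1$ the \emph{entire} Verma (rather than just a proper $\gl_2$-submodule $V_{-k}'$) consists of globally defined sections on $\bbP^1 \setminus \{0\}$, forcing the full pairing with classical forms to vanish at this weight.
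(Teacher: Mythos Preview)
Your proposal is correct and follows the same approach as the paper. The paper's proof is terser: it only explicitly checks the highest weight vector $z^{-k}y^{-k}=x^{-k}$ (which has a pole only at $0$, so pairing with a global classical form extends over $X_{|z|>0}$), leaving the extension to the rest of $V_{-k}'$ or $V_{-1}$ implicit. Your version spells this out by checking all basis monomials $z^j e_{z^{-k}} = x^j y^{-k-j}$ and verifying the degree inequality $-k-j\geq 0$, which is exactly the content behind the paper's ``pole only at $0$'' remark.
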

\begin{proof}
In both cases the highest weight vector is $z^{-k} y^{-k}=x^{-k}$. Thus, for $s \in S_k^\cl$, 
\[ \langle s, z^{-k}y^{-k} \rangle= \langle s, x^{-k} \rangle \] 
extends to a section on $H^0(X_{|z|>0}, \calI)$ (because $s$ is a global section and $x^{-k}$ has a pole only at $0$), and thus $[\langle s, x^{-k} \rangle]$ vanishes. 
\end{proof}

In particular, the lemma implies that for $k\geq 2$ we obtain an induced pairing
\[ S_k^\cl \otimes H^1(\bbP^1, \calO(-k)) \rightarrow H^1(X, \calI). \]
It is immediate from the explicit description of cup products in terms of \v{C}ech classes that this agrees with the global pairing (\ref{eqn:global-cup}). 

\subsection{Local analyticity, continuity, and equivariance} 
We now show
\begin{lemma}\label{lemma:loc-an-cont-eq} The map (\ref{eq:ocmf-cup}) is continuous and factors through a map of $(\gl_2, B(\bbQ_p))$-modules
\[ S_\kappa^\dagger \otimes H^1_{\{0\}}(\bbP^1, \calO(\kappa^{-1})) \rightarrow H^1(X, \calI)^\locan. \]
\end{lemma}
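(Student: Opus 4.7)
The plan is to split the statement into three claims: continuity of the map to $H^1(X, \calI)$, factorization of the image through the locally analytic vectors, and $(\gl_2, B(\bbQ_p))$-equivariance of the resulting map. Continuity is formal from the construction of (\ref{eq:ocmf-cup}): a representative $h$ for $c$ lives in some $H^0(\bbP^1_{0 < |z| \leq \epsilon}, \calO(\kappa^{-1}))$, one pulls back via $\pi_\HT$, multiplies by the section $f$ (a continuous bilinear operation on the relevant Banach sections), and applies the cokernel projection from (\ref{eq:fund-ex-seq}), which is continuous as a topological quotient.

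For $B(\bbQ_p)$-equivariance, the observation is that after taking the colimit over $\epsilon$, the cover by $X_{|z| \leq \epsilon}$ and $X_{|z| > 0}$ becomes $B(\bbQ_p)$-stable: (\ref{eq:action-on-z}) shows that an element of $B(\bbQ_p)$ sends $|z| \leq \epsilon$ into $|z| \leq \epsilon'$ for some (possibly smaller) $\epsilon'$ while preserving $|z| > 0$. The sheaf pairing $\langle \cdot, \cdot\rangle$ is tautologically equivariant and $\pi_\HT$ is $\GL_2(\bbQ_p)$-equivariant, so the map on representatives is $B(\bbQ_p)$-equivariant, and this descends to the cokernel.

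The main point is that the image must lie in the $\GL_2(\bbQ_p)$-locally analytic vectors, not merely the $B(\bbQ_p)$-locally analytic ones. Given $v = [\langle f, \pi_\HT^* h\rangle]$, choose $N$ large enough that (i) $\Gamma_0(p^N)$ fixes $f$ (using smoothness of $f$), (ii) $\Gamma_0(p^N)$ stabilizes the cover (a direct computation with (\ref{eq:action-on-z}) shows $\Gamma_0(p^N)$ sends $|z| \leq \epsilon$ into itself for $\epsilon \leq 1/p^N$), and (iii) the action of $\Gamma_0(p^N)$ on $H^1_{|z| < \epsilon}(\bbP^1, \calO(\kappa^{-1}))$ is locally analytic as in Lemma \ref{lemma.local-cohomology-properties}. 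For $g \in \Gamma_0(p^N)$ one then computes $g \cdot v = [\langle f, \pi_\HT^*(g \cdot h)\rangle]$, so the orbit map $g \mapsto g \cdot v$ is the composition of the locally analytic orbit $g \mapsto g \cdot c$ in $H^1_{\{0\}}(\bbP^1, \calO(\kappa^{-1}))$ with the continuous linear map $c \mapsto [\langle f, \pi_\HT^* c\rangle]$ into $H^1(X, \calI)$. Hence the orbit map is locally analytic on the open subgroup $\Gamma_0(p^N) \subset \GL_2(\bbQ_p)$, so $v \in H^1(X, \calI)^\locan$. The $\gl_2$-equivariance then follows by differentiating at the identity: $X \cdot v = [\langle f, \pi_\HT^*(X \cdot h)\rangle]$ for $X \in \gl_2$, matching the $\gl_2$-action on the source (trivial on $S_\kappa^\dagger$, natural on the local cohomology).

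The hardest point is the passage from $B(\bbQ_p)$-equivariance to $\GL_2(\bbQ_p)$-local analyticity, since in general an equivariance statement for a proper closed subgroup does not detect locally analytic vectors for the ambient group. The resolution exploits the fact that $\Gamma_0(p^N)$, though not contained in $B(\bbQ_p)$, still acts compatibly on all the data entering the construction (the cover, the section $f$, and the local cohomology class $c$); this is precisely what is needed to produce a locally analytic orbit map on an open neighborhood of the identity in $\GL_2(\bbQ_p)$.
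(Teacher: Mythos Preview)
Your overall architecture matches the paper's: reduce to a fixed $f$, use that $f$ is $\Gamma_0(p^N)$-smooth so the orbit map $g\mapsto g\cdot v$ factors as the locally analytic orbit $g\mapsto g\cdot c$ in local cohomology followed by the linear pairing-with-$f$ map, then differentiate to get $\gl_2$-equivariance. That is exactly the paper's strategy.

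The gap is in your treatment of continuity, which you call ``formal.'' It is not. The Banach topology on $H^1(X,\calI)$ is \emph{not} the quotient topology coming from the colimit in (\ref{eq:fund-ex-seq}); it is defined by declaring the unit ball to be the image of $H^1(X,\calI^+)$. So the statement ``the cokernel projection is continuous as a topological quotient'' does not give what you need. Moreover, to conclude that the composite orbit map is locally analytic you need the pairing-with-$f$ map to be bounded as a map between \emph{Banach} spaces (so that convergent power series remain convergent), and you have only asserted continuity on the Fr\'echet space $H^1_{\{0\}}$. You mention $H^1_{|z|<\epsilon}$ in condition (iii) but never explain how to pair an element of that larger space with $f$: representatives of classes in $H^1_{|z|<\epsilon}$ live on $\bbP^1_{|z|=\epsilon}$, not on a punctured neighborhood of $0$, so the recipe defining (\ref{eq:ocmf-cup}) does not apply verbatim.

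The paper fills exactly this gap. It replaces the quasi-Stein cover $\{X_{|z|\leq\epsilon},X_{|z|>0}\}$ by the affinoid-perfectoid cover $\{X_{|z|\leq\epsilon},X_{|z|\geq\epsilon}\}$; the associated Mayer--Vietoris surjection $H^0(X_{|z|=\epsilon},\calI)\twoheadrightarrow H^1(X,\calI)$ then lets one define a pairing with all of $H^1_{|z|<\epsilon}(\bbP^1,\calO(\kappa^{-1}))$, compatible with (\ref{eq:ocmf-cup}) via corestriction. Boundedness is then checked by hand: write $f=a\cdot e_\kappa$, scale so $p^Na\in\calI^+$, observe that the unit ball of $H^1_{|z|<\epsilon}$ is represented by $c\cdot e_{\kappa^{-1}}$ with $c\in\calO^+$, and conclude that the image lies in $p^{-N}$ times the image of $H^1(X,\calI^+)$. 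Once you have this bounded extension to a Banach space carrying a locally analytic $\Gamma_0(p^{n+1})$-action, your argument for local analyticity and $\gl_2$-equivariance goes through as written.
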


Here the domain $S_\kappa^\dagger \otimes H^1_{\{0\}}(\bbP^1, \calO(\kappa^{-1}))$ is topologized as the colimit of 
\[ W \otimes H^1_{\{0\}}(\bbP^1, \calO(\kappa^{-1})) \textrm{ for } W \subset S_\kappa^\dagger, \dim W < \infty,   \]
and the codomain $H^1(X, \calI)$ is considered as a Banach space with unit ball the image of $H^1(X, \calI^+)$. The map (\ref{eq:ocmf-cup}) is $B(\bbQ_p)$-equivariant by construction, so Lemma \ref{lemma:loc-an-cont-eq} is reduced to showing that for any fixed $f \in S_{\kappa}^\dagger$, the map induced by pairing with $f$ is continuous as a map $H^1_{\{0\}}(\bbP^1, \calO(\kappa^{-1})) \rightarrow H^1(X, \calI)$ and factors through a $\gl_2$-equivariant map
\[ H^1_{\{0\}}(\bbP^1, \calO(\kappa^{-1})) \rightarrow H^1(X, \calI)^\locan. \]

We will show this by extending our pairing for any \emph{fixed} radius of overconvergence $\epsilon$ to a pairing with the local cohomology of $|z| < \epsilon$ in $\bbP^1$; these same groups were also used to define the $\gl_2$-action on $H^1_{\{0\}}(\bbP^1, \calO(\kappa^{-1}))$ in \ref{subsub.local-cohomology}. The key point is that the $\gl_2$-action on these larger groups is induced by a locally analytic $\Gamma_0(p^n)$-action and the extended pairings will be $\Gamma_0(p^n)$-equivariant by construction; thus we will obtain the desired statement as soon as we also have continuity. We now carry out this strategy:

If we fix $\epsilon=1/p^n > 0$, then we can also compute $H^1(X, \calI)$ using the Mayer-Vietoris sequence for the covering by $X_{|z| \leq \epsilon}$ and $X_{|z| \geq \epsilon}$. The Mayer-Vietoris sequence for the covering by $X_{|z| \leq \epsilon}$ and $X_{|z| > 0}$ maps to it naturally via restriction, and thus we obtain a commutative diagram 
\[ \xymatrix{ & H^0(X_{|z| = \epsilon}, \calI) \ar@{>>}[dr] & \\ H^0(X_{0<|z| \leq \epsilon}, \calI)\ar@{>>}[rr]\ar[ur]\ar[dr] & & H^1(X, \calI) \\ & \colim_{\epsilon'>0} H^0(X_{0 < |z| \leq \epsilon'}, \calI)\ar@{>>}[ur]_{(\ref{eq.local-cohom-surjection})} & }\]
Imitating the steps of the previous section for $\epsilon$ sufficiently small and using the top right arrow in place of (\ref{eq.local-cohom-surjection}), we obtain a pairing
\begin{equation}\label{eq.overconvergent-radius-pairing} H^0(X_{|z|\leq \epsilon}, \omega^{\kappa} \otimes \calI)^\sm \otimes H^1_{|z|<\epsilon}(\bbP^1, \calO(\kappa^{-1})) \rightarrow H^1(X, \calI). \end{equation}
By Lemma \ref{lemma.local-cohomology-properties}, $H^1_{|z|<\epsilon}(\bbP^1, \calO(\kappa^{-1}))$ has a locally analytic action of $\Gamma_0(p^{n+1})$ that induces the $\gl_2$-action on $H^1_{\{0\}}(\bbP^1, \calO(\kappa^{-1}))$. By construction, (\ref{eq.overconvergent-radius-pairing}) is $\Gamma_0(p^{n+1})$-equivariant. 

Also by construction, the diagram
\[ \xymatrix{ H^0(X_{|z|\leq \epsilon}, \omega^{\kappa} \otimes \calI)^\sm \otimes H^1_{|z|<\epsilon}(\bbP^1, \calO(\kappa^{-1})) \ar[dr]^{(\ref{eq.overconvergent-radius-pairing})} & \\ 
 H^0(X_{|z|\leq \epsilon}, \omega^{\kappa} \otimes \calI)^\sm \ar[u]^{\Id \times \mathrm{cores}} \ar[d]_{\mathrm{res} \times \Id}\ar[r] \otimes H^1_{\{0\}}(\bbP^1, \calO(\kappa^{-1})) &   H^1(X, \calI) \\
S_\kappa^\dagger \otimes H^1_{\{0\}}(\bbP^1, \calO(\kappa^{-1})) \ar[ur]_{(\ref{eq:ocmf-cup})} &} \]
comparing the two pairings commutes. Thus, at the price of shrinking our space of overconvergent modular forms by enforcing a radius of convergence, we enlarge the local cohomology group that we are allowed to pair with. 

Now, any $f \in S_\kappa^\dagger$ extends to a smooth section over some $X_{|z|\leq \epsilon}$, $\epsilon=1/p^n$, so the pairing with $f$ extends to $H^1_{|z|<\epsilon}(\bbP^1, \calO(\kappa^{-1}))$. We may choose a compact open subgroup $K_p \subset \Gamma_0(p^{n+1})$ fixing $f$, and the resulting map 
\begin{equation}\label{eq.pairing-with-single-element-extended} H^1_{|z|<\epsilon}(\bbP^1, \calO(\kappa^{-1})) \rightarrow H^1(X, \calI) \end{equation}
is then $K_p$-equivariant. In particular, since the action of $K_p$ on the left is locally analytic, (\ref{eq.pairing-with-single-element-extended}) will factor as a $\gl_2$-equivariant map through $H^1(X, \calI)^\locan$ as soon as we know that it is a bounded (equivalently, continuous) map of Banach spaces -- the point here is that the convergent power series describing the orbit maps on the left will then remain convergent after applying the map!

This boundedness is straightforward after unraveling the definitions: we can write $f = a \cdot e_\kappa$ for $a \in H^0(X_{|z| \leq \epsilon}, \calI)$. Because $X_{|z| \leq \epsilon}$ is affinoid, there exists $N \geq 0$ such that 
\[ b:=p^N a \in H^0(X_{|z| \leq \epsilon}, \calI^+) = H^0(X_{|z|\leq\epsilon}, \calI)\cap H^0(X_{|z|\leq\epsilon}, \calO^+).\]
On the other hand, it is immediate from the description of the Banach norm in \ref{subsub.local-cohomology} that the elements in the unit ball of the Banach space $H^1_{|z| < \epsilon}(X, \calO(\kappa^{-1}))$ are all represented by elements $c \cdot e_\kappa$ for $c \in H^0(\bbP^1_{|z| = \epsilon}, \calO^+)$. Pairing gives
\[ [\langle f, c \cdot e_\kappa^{-1} \rangle ] =  [\langle a \cdot e_\kappa, c \cdot e_{\kappa^{-1}} \rangle ] = \frac{1}{p^n} [ bc ]. \]
Now, $bc \in H^0(X_{|z|=\epsilon}, \calI^+)$, thus $[bc]$ is in the image of $H^1(X, \calI^+)$, the unit ball in $H^1(X, \calI)$. We conclude that the map (\ref{eq.pairing-with-single-element-extended}) is bounded. 

This completes the proof of Lemma \ref{lemma:loc-an-cont-eq}. In particular, we find the restriction of (\ref{eq:ocmf-cup}) to $S_\kappa^\dagger \otimes V_{\kappa^{-1}}$ is a map of $(\gl_2, B(\bbQ_p))$-modules. 

\subsection{Proof of Theorem \ref{maintheorem:ocmf-cup}}

We now prove Theorem \ref{maintheorem:ocmf-cup}, assuming the identification $H^1(X, \calI)=\tilde{H}^1_{c,\bbC_p}$. This identification will be explained in Lemma \ref{lemma:comparison} (there is a bit to say here because only the torsion comparison is given in \cite{scholze:torsion}). 

We begin by verifying that the map is injective on the subspace of generating highest weight vectors, $S_{\kappa}^\dagger \otimes z^{-1} e_{\kappa}^{-1}.$ It suffices to verify it on the $N_0$-invariants,
\[ (S_{\kappa}^\dagger)^{N_0} \otimes z^{-1} e_{\kappa}^{-1},\]
since anything in $S_{\kappa}^\dagger$ can be moved into $(S_{\kappa}^\dagger)^{N_0}$ using the action of $\diag(p,1)^\bbZ$. 

\begin{lemma} For $\Lie \kappa \neq 1$, the restriction of (\ref{eq:ocmf-cup}) to $(S_{\kappa}^\dagger)^{N_0} \otimes z^{-1} e_{\kappa^{-1}}$ is injective .
\end{lemma}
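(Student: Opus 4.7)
My plan is to reduce injectivity to an explicit computation in continuous $N_0$-cohomology by applying $H^\ast_{\cont}(N_0, -)$ to the fundamental Mayer--Vietoris sequence (\ref{eq:fund-ex-seq}) and extracting (after projecting to the first summand of the direct sum) the connecting homomorphism
\[ \delta: H^1(X, \calI)^{N_0} \to H^1_{\cont}(N_0, H^{0,\dagger}(X_{\{0\}}, \calI)). \]
It is enough to show that the composition of $\delta$ with the pairing (\ref{eq:ocmf-cup}) is injective on $(S_\kappa^\dagger)^{N_0} \otimes z^{-1} e_{\kappa^{-1}}$.

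For $f = a\, e_\kappa \in (S_\kappa^\dagger)^{N_0}$, the class $[a/z] \in H^1(X, \calI)$ lifts naturally to the section $a/z \in \colim_\epsilon H^0(X_{0 < |z| \leq \epsilon}, \calI)$. Using (\ref{eq.action-formula-for-sections}) together with the $N_0$-invariance of $f$, for $n_t = \begin{pmatrix}1 & t \\ 0 & 1\end{pmatrix}$ I compute
\[ (n_t - 1)\cdot(a/z) = a(z) \cdot \frac{(tz+1)\kappa^{-1}(tz+1) - 1}{z} = a(z) \cdot \bigl[t(1-\Lie\kappa) + z \cdot h(z,t)\bigr], \]
where $h$ is locally analytic and I have used $\kappa^{-1}(1+tz) = 1 - \Lie \kappa \cdot tz + O(z^2)$. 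The right-hand side is holomorphic at $z=0$, hence lies in $H^{0,\dagger}(X_{\{0\}}, \calI)$; it thus represents the cocycle $\sigma = \delta([a/z])$.

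To detect nontriviality of $\sigma$, I restrict coefficients to the canonical ordinary tube $X_{\{0\}}^\circ \subset X_{\{0\}}$ cut out by $\pi_{\HT}^\ast z = 0$. On this locus $\kappa(tz+1) = 1$, so $N_0$ acts trivially on $e_\kappa$ and $a|_{X_{\{0\}}^\circ}$ is itself $N_0$-invariant. The restricted cocycle then reads
\[ \sigma_0(n_t) = (1-\Lie\kappa) \cdot t \cdot a|_{X_{\{0\}}^\circ}, \]
a continuous homomorphism into the trivial-action module $H^0(X_{\{0\}}^\circ, \calI)^{N_0} \cong H^0(\Ig^\circ, \calI)$, where $\Ig^\circ = X_{\{0\}}^\circ/N_0$ is the canonical Igusa tower. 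Under the identification $H^1_{\cont}(N_0, H^0(\Ig^\circ, \calI)) = \Homc(N_0, H^0(\Ig^\circ, \calI))$, the class of $\sigma_0$ is $(1-\Lie\kappa)\cdot a|_{X_{\{0\}}^\circ}$, which is nonzero provided $\Lie\kappa \neq 1$ and $a|_{X_{\{0\}}^\circ} \neq 0$. The latter holds when $f \neq 0$ by the $q$-expansion principle for cuspidal overconvergent modular forms (restriction to the canonical ordinary tube is faithful).

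The main obstacle I foresee is rigorously justifying that the restriction of cocycles from $H^1_{\cont}(N_0, H^{0,\dagger}(X_{\{0\}}, \calI))$ to $H^1_{\cont}(N_0, H^0(X_{\{0\}}^\circ, \calI))$ indeed detects nontriviality --- concretely, that a nonzero continuous homomorphism $N_0 \to M^{N_0}$ remains a nontrivial $1$-cocycle when viewed with values in $M = H^0(X_{\{0\}}^\circ, \calI)$. Once this is in place, the appearance of the factor $(1-\Lie\kappa)$ from the linearization of the $N_0$-action on $e_\kappa$ is exactly what forces the exclusion of infinitesimal weight $1$.
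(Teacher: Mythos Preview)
Your approach is exactly the one the paper takes: apply the connecting homomorphism $\delta$ from $N_0$-cohomology of the fundamental Mayer--Vietoris sequence, compute the explicit cocycle via (\ref{eq.action-formula-for-sections}), and restrict to the canonical ordinary locus where $z=0$ so the cocycle collapses to $n_t \mapsto t(1-\Lie\kappa)\langle f, e_{\kappa^{-1}}\rangle$. Up to this point your argument and the paper's agree line by line.

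The genuine gap is precisely the one you flag at the end. The cocycle $\sigma_0$ takes values in the $N_0$-invariants of $M = H^0(X_{\{0\}}^\circ, \calI)$, but $N_0$ acts nontrivially on $M$ itself, so a nonzero homomorphism $N_0 \to M^{N_0}$ can very well be a coboundary $(n-1)m$ for some $m \in M \setminus M^{N_0}$. Your passage to $H^0(\Ig^\circ, \calI)$ does not help: there is no $N_0$-equivariant projection $M \to M^{N_0}$, so this is not a map of coefficient modules and does not induce a map on $H^1_{\cont}(N_0,-)$. The $q$-expansion principle tells you $a|_{X_{\{0\}}^\circ} \neq 0$, but that is not the issue.

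The paper closes this gap by restricting further, not to the whole ordinary tube, but to an explicit affinoid perfectoid $\tilde{D}_{|q-1|\leq |p|}$ obtained as the perfectoid cover of a Serre--Tate disk around a fixed ordinary point. There the $N_0$-action is identified with the standard $\bbZ_p(1)$-action on a perfectoid torus coordinate, and one invokes the classical $p$-adic Hodge theory fact (e.g.\ \cite[Lemmas 5.5 and 6.18]{scholze:p-adic-ht}) that for $A$ the ring of functions on such a torus, the map $H^1(\bbZ_p(1), A^{\bbZ_p(1)}) \to H^1(\bbZ_p(1), A)$ is an isomorphism. This is exactly the statement that a nonzero homomorphism into the invariants is not a coboundary, and it is not formal --- it is the Tate--Sen computation underlying the Hodge--Tate decomposition. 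One also needs to choose the component (equivalently, the trivialization of $\bbZ_p(1)$) so that $f$ does not vanish identically on $\tilde{D}$; the paper argues this using that $f$ descends to finite level.
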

\begin{proof} 
Taking continuous $N_0$-group cohomology in (\ref{eq:fund-ex-seq}), we obtain as part of the boundary a map
\[ \delta: H^1(X, \calI)^{N_0} \rightarrow H^1(N_0, H^{0, \dagger}(X_{\{0\}}, \calI)). \]
We describe it explicitly: For any class $[g] \in H^1(X, \calI)^{N_0}$ and $\gamma \in N_0$, we can express $\gamma\cdot g - g$ uniquely as $a_\gamma - b_\gamma$ for $a_\gamma \in H^{0,\dagger}(X_{\{0\}}, \calI)$ and $b_\gamma \in H^0(X_{|z| >0 }, \calI)$. Then $\delta([g])$ is the class represented by the cocycle $\gamma \mapsto a_\gamma$. 

For  $f \in S_\kappa^{\dagger, N_0}$, we will show that $\delta([\langle f, z^{-1} e_{\kappa^{-1}} \rangle]) \neq 0.$ We can compute a representing cocycle explicitly by the above recipe: Using (\ref{eq.action-formula-for-sections}), we find
\begin{align} \begin{bmatrix}1 & u \\ 0 & 1\end{bmatrix} \cdot \langle f, z^{-1} e_{\kappa^{-1}} \rangle & = \langle f, (1+uz)\kappa^{-1}(1 + uz) z^{-1} e_{\kappa^{-1}} \rangle \\
&= (z^{-1} + u(1-\Lie \kappa) + \ldots ) \langle  f, e_{\kappa^{-1}} \rangle
\end{align} 
where $\ldots$ is divisible by $z$. 
Subtracting off $\langle f, z^{-1} e_{\kappa^{-1}} \rangle$, we are left with
\[ (u(1-\Lie \kappa) + \ldots ) \langle  f, e_{\kappa^{-1}} \rangle. \]
This is an element of $H^{0,\dagger}(X_{\{0\}}, \calI)$, so 
\[ a_{\begin{bmatrix} 1 & u \\ 0 & 1 \end{bmatrix}}=(u(1-\Lie \kappa) + \ldots ) \langle  f, e_{\kappa^{-1}} \rangle \textrm{ and } b_{\begin{bmatrix} 1 & u \\ 0 & 1 \end{bmatrix}}=0. \]
Thus, $\delta([\langle f, z^{-1} e_{\kappa^{-1}} \rangle]) \in H^1(N_0, H^{0, \dagger}(X_{\{0\}}, \calI))$ is represented by the cocycle 
\[ \begin{bmatrix} 1 & u \\ 0 & 1 \end{bmatrix} \mapsto (u(1-\Lie \kappa) + \ldots ) \langle  f, e_{\kappa^{-1}} \rangle. \]

To verify this cocycle represents a non-zero class, we are free to restrict to any locus inside the canonical ordinary locus $X_{\{0\}}^\circ$. We will thus check that it is non-zero after restriction to a rational open in a standard perfectoid torus inside the ordinary locus, where it will follow from a classical computation in $p$-adic Hodge theory. Because $z=0$ here, we note that the cocycle simplifies to 
\begin{equation}\label{eq:restricted-cocycle} \begin{bmatrix} 1 & u \\ 0 & 1 \end{bmatrix} \mapsto u(1 - \Lie \kappa) \langle f, e_{\kappa^{-1}} \rangle. \end{equation}

Fix an ordinary elliptic curve $E/\overline{\bbF_p}$, then choose a trivialization of the \'{e}tale part of the Tate module of $E$ in order to obtain a Serre-Tate coordinate $q$ on the formal deformation space. The generic fiber of the formal deformation space is the open rigid analytic disk $D: |q-1|<1$, and, after choosing a trivialization of $\bbZ_p(1)$, we obtain canonical $N_0$ level structure on the universal deformation $E_\univ/D$  -- that is, we have 
\[ 0 \rightarrow \bbZ_p \rightarrow T_p E_\univ \rightarrow \bbZ_p \rightarrow 0 \]
where the first $\bbZ_p$ spans the canonical subgroup. By a standard argument (cf., e.g., \cite[7.2]{howe:circle-action} for the same construction on the Igusa formal scheme), splitting this extension can be accomplished by passing to the open perfectoid disk \[ \tilde{D} \sim \lim D \xleftarrow{ q \mapsto q^p} D \xleftarrow{q \mapsto q^p} D \ldots. \]
Indeed, already at the level of formal schemes the map $q \mapsto q^{p^n}$ sends an extension of $p$-divisible groups
\[ 1 \rightarrow \mu_{p^\infty} \rightarrow G \rightarrow \bbQ_p/\bbZ_p \rightarrow 1 \]
to 
\[ 1 \rightarrow \mu_{p^\infty}/\mu_{p^n}=\mu_{p^\infty} \rightarrow G/\mu_{p^n} \rightarrow \bbQ_p/\bbZ_p \rightarrow 1, \]
and this factors through an isomorphism with the degree $p^n$ cover of $D$ parameterizing splittings of the $p^n$-torsion extension 
\[ 1 \rightarrow \mu_{p^n} \rightarrow G[p^n] \rightarrow \frac{1}{p^n}\bbZ_p/\bbZ_p \rightarrow 1 \]
because 
\[ G[p^n]/\mu_p^n \hookrightarrow (G/\mu_{p^n})[p^n] \]
projects isomorphically onto $\frac{1}{p^n}\bbZ_p/\bbZ_p$. This space of splittings is a torsor for $\mu_{p^n} = \Hom(\frac{1}{p^n}\bbZ_p/\bbZ_p, \mu_{p^n})$ and, under the identification of this cover with $q \mapsto q^{p^n}$, the covering action is just by multiplication of the coordinate $q$. 

We thus obtain a map $\tilde{D} \rightarrow X$. Now, we can hit any component of $X_{\{0\}}^\circ$ by changing our trivialization of $\bbZ_p(1)$, and because the overconvergent modular form $f$ comes from finite level, we can choose this trivialization so that the pullback of $f$ does not vanish on $\tilde{D}$ (otherwise there would be a finite level modular curve where $f$ vanished on an open subset of each connected component of the canonical ordinary locus, thus $f$ would be zero). Moreover, one finds that the action of $N_0$ on $X$ is identified with the natural action of $\bbZ_p(1)$ on $\tilde{D}$ -- this is the infinite level consequence of the interpretation of the covering action at finite level in terms of splittings discussed above. 

Now, by (\ref{eq:can-base-transf}), the restriction to $X_{\{0\}}$ of $\langle f, e_{\kappa^{-1}} \rangle$ is $N_0$-invariant (because $z=0$ here). We further restrict to the affinoid perfectoid $\tilde{D}_{|q-1| \leq |p|}$, with ring of functions $A$, and write $g \in A$ for the non-zero and $\bbZ_p(1)$-invariant restriction of $\langle f, e_{\kappa^{-1}} \rangle$. Our restricted cocycle is then 
\[ \begin{bmatrix}1 & u\\0 & 1\end{bmatrix} \mapsto u(1-\Lie\kappa)g. \]
In particular, the class it represents in group cohomology is the image of $(1-\Lie \kappa)g$ under the composition
\[ A^{\bbZ_p(1)} \xrightarrow{\sim} H^1(\bbZ_p(1), A^{\bbZ_p(1)}) \rightarrow H^1(\bbZ_p(1), A), \]
where the first isomorphism is given by evaluating at our choice of generator for $\bbZ_p(1)$. It is a standard result that the second arrow is also an isomorphism (cf. \cite[Lemmas 5.5 and 6.18]{scholze:p-adic-ht}), thus, because $\Lie\kappa \neq 1$ and $g \neq 0$, we conclude the cohomology class is non-zero. 

\end{proof}

\begin{remark} 
More canonically, the cocycle after restriction to $\tilde{D}$ in the proof above is identified with the differential form $\langle f, e_{\kappa^{-1}} \rangle \frac{dq}{q}.$ In fact, it should be possible to make this identification over the generic fiber of the entire Igusa formal scheme (with $\frac{dq}{q}$ replaced by the differential form identified with $e_{z^2}$ via Kodaira-Spencer). One runs into a delicacy here because the generic fiber of the Igusa formal scheme does not fit directly into the framework of \cite{scholze:p-adic-ht}; however, it should be possible to directly compute with the Faltings extension on the perfectoid Igusa tower (as opposed to standard torus coordinates) to obtain this identification. 
\end{remark}

This concludes the proof of Theorem \ref{maintheorem:ocmf-cup} when $\Lie \kappa \not\in \bbZ_{\geq 2}$, as in this case $V_{\kappa^{-1}}$ is irreducible.  For $\kappa=z^{k}$, $k \geq 2$, we can only conclude that the kernel is of the form $W \otimes V'_{-k}$ for some subspace $W \subset S_k^\dagger$: Indeed, any $\gl_2$-submodule $M \subset S_k^\dagger \otimes V_{-k}$ will be generated by its highest weight vectors, but if we write $U$ for the one-dimensional highest weight space generating $V_{-k}$ and $U'$ for the one dimensional highest weight space generating $V_{-k}'$, then the highest weight vectors in $M$ are given by 
\[ M \cap (S_k^\dagger \otimes U) \oplus M \cap (S_k^\dagger \otimes U'),\]
and for $M$ the kernel we have just shown the first summand is zero, thus the second gives the subspace $W$.

We obtain an induced injection on 
\[ W \otimes V_{-k}/V_{-k}'= W \otimes H^1(\bbP^1, \calO(-k)). \]
In \S\ref{ss:local-cup-compat} we saw that $S_k^\cl \subset W$, and that the induced map on $S_k^\cl \otimes H^1(\bbP^1, \calO(-k))$ is identified with the global cup product. Thus, it remains only to show that $W$ is no larger than $S_k^\cl$. 

To show this, we observe that the cup product is already defined over $\bbQ_p$, and thus the image lands in the locally algebraic vectors in the Hodge-Tate weight zero part of $\tilde{H}^1_c$. On the other hand, Lemma \ref{lemma:loc-alg-ht-weight-zero} below implies that taking locally algebraic vectors commutes with passage to the Hodge-Tate weight zero part, thus the computation in \ref{sss:intro-loc-alg} shows that the locally algebraic vectors of type $H^1(\bbP^1, \calO(-k))$ in the Hodge-Tate weight zero part are \emph{abstractly} isomorphic to $S_k^\cl \otimes H^1(\bbP^1, \calO(-k))$. Since the map on $W \otimes H^1(\bbP^1, \calO(-k))$ is an injection and $S_k^\cl \subset W$, admissibility of $S_k^\cl$ implies that $W$ must in fact be equal to $S_k^\cl$. 

\begin{remark}
Using a $K_p$-equivariant Mayer-Vietoris sequence to compute 
\[ H^1(X, \Sym^k \calO_X^2 \otimes \calI)=H^1(X, \calI) \otimes \Sym^k \bbC_p^2, \]
then taking $K_p$-invariants, one naturally recovers the Hodge-Tate filtration 
\[ 0\rightarrow H^1(X/K_p, \omega^{-k} \otimes \calI)(k) \rightarrow \Hom_{K_p}( (\Sym^k \bbC_p^2)^*, \tilde{H}^1_{c,\bbC_p} )  \rightarrow S_{k+2}^{K_p}(1) \rightarrow 0\]
essentially by Falting's \cite{faltings:hodge-tate} method (the key point is that if we pass to sheaves of smooth vectors in (\ref{eq:hodge-tate-ext}) then the induced boundary map is naturally identified with the Kodaira-Spencer isomorphism). It follows from a result of Emerton (see Theorem \ref{emerton:loc-alg} below) that the natural map 
\[ H^1_c(Y_{K_pK^P}, \Sym^k \bbQ_p^2) \rightarrow \Hom_{K_p}((\Sym^k\bbQ_p^2)^*, \tilde{H}^1_c) \]
is an isomorphism, and we recover Falting's Hodge-Tate decomposition. Using this interpretation, one can compute that our global cup product is identified with $(k-1)$ times the canonical splitting of the Hodge-Tate filtration composed with Emerton's isomorphism. The dual cup product (\ref{eq:second-half-cup}) is identified on the nose with the inclusion in the Hodge-Tate filtration composed with Emerton's isomorphism. 
\end{remark}

\section{The Hodge-Tate weight zero part of completed cohomology}\newcommand{\ohat}{\widehat{\otimes}}
\label{sec:hodge-tate}

In this section we recall more carefully the relation between completed cohomology and $H^1(X, \calI)$, and prove Corollary \ref{c:hts-weights}. The main point is to verify that passing to the Hodge-Tate weight zero subspace commutes with natural representation theoretic operations (locally algebraic vectors, Hecke eigenspaces). This is carried out in a few simple lemmas, and then Corollary \ref{c:hts-weights} is a straightforward consequence of a weak form of local-global compatibility for completed cohomology and the Galois equivariance of our cup-product constructions. 

\subsection{First lemmas}

\begin{definition}
If $E \subset \bbC_p$ is a finite extension of $\bbQ_p$ and $V$ is a unitary $E$-Banach representation of $G_{E}$, we write 
\[ \HT^E_0(V) = (V \widehat{\otimes}_{E} \bbC_p)^{G_{E}}. \]
where here we take the semilinear action on $V \ohat_E \bbC_p$. When $E$ is apparent from the context, we will drop the superscript. 
\end{definition}

Recall that any (continuous) finite dimensional representation of $G_E$ on an $E$-vector space can be equipped with the structure of a unitary representation (i.e. fixes an $\calO_E$-lattice). Any two lattices induce equivalent norms, thus we can work with finite dimensional representations without fixing any extra information. 

If $V$ is finite dimensional, then the Hodge-Tate weight zero part of $V$ in the classical sense is $\HT^E_0(V)\otimes_E \bbC_p \subset V \otimes_E \bbC_p$. For our purposes we will work only with the invariants $\HT^E_0(V)$, however.

\begin{lemma}\label{lemma:set-of-operators} Suppose $V$ and $W$ are unitary $E$-Banach representations of $G_{E}$ and $S$ is a collection of bounded $G_E$-equivariant operators $V \rightarrow W$. We write $V_S$ for their simultaneous kernel, a closed subspace of $V$. Then 
\[ \HT^E_0(V_S)=\HT^E_0(V)_S.\] 
\end{lemma}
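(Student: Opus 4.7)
The plan is to deduce the lemma from left exactness of $\HT^E_0$ applied to a single short exact sequence, then handle the compatibility with the family $S$ by reducing to the case of a single quotient map. Set $V' = V/V_S$, a unitary $E$-Banach $G_E$-representation (the quotient topology coincides with the $E$-Banach structure since $V_S$ is closed), and let $\pi: V \twoheadrightarrow V'$ be the projection. Each $T \in S$ factors as $T = \bar T \circ \pi$ for a bounded $G_E$-equivariant $\bar T: V' \to W$, and by construction the family $\{\bar T\}_{T \in S}$ has trivial simultaneous kernel on $V'$. The fundamental input is that $-\hat\otimes_E \bbC_p$ is exact on short exact sequences of unitary $E$-Banach spaces: applied to unit balls, this is the exactness of $-\hat\otimes_{\calO_E} \calO_{\bbC_p}$ on short exact sequences of flat, $p$-adically complete and separated $\calO_E$-modules, which holds because $\calO_{\bbC_p}$ is $\calO_E$-flat (torsion-free over a DVR).

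Combining this exactness with left-exactness of $(-)^{G_E}$, the functor $\HT^E_0$ is itself left exact. Applied to $0 \to V_S \to V \xrightarrow{\pi} V' \to 0$, this yields
\[ 0 \to \HT^E_0(V_S) \to \HT^E_0(V) \xrightarrow{\bar\pi} \HT^E_0(V'), \]
so $\HT^E_0(V_S)$ is identified with $\ker \bar\pi$ inside $\HT^E_0(V)$. Since each $T\hat\otimes 1$ factors through $\pi\hat\otimes 1$, the containment $\ker\bar\pi \subseteq \HT^E_0(V)_S$ is automatic, providing the inclusion $\HT^E_0(V_S) \subseteq \HT^E_0(V)_S$.

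For the reverse inclusion, given $v \in \HT^E_0(V)$ killed by every $T \hat\otimes 1$, I must show $\bar v := \bar\pi(v) = 0$ in $\HT^E_0(V')$, knowing that $(\bar T \hat\otimes 1)(\bar v) = 0$ for all $T$ and that the $\bar T$ are simultaneously injective on $V'$ itself. For a finite subfamily $S' \subset S$ the assembled map $\Phi_{S'}: V' \to W^{\oplus S'}$ is bounded with closed kernel $V'_{S'}$, and applying the left exact statement from Step 1 to the short exact sequence $0 \to V'_{S'} \to V' \to V'/V'_{S'} \to 0$ identifies $\HT^E_0(V'_{S'})$ with the intersection of $\ker(\bar T \hat\otimes 1)$ for $T \in S'$; this settles the case of finite $S'$ directly.

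The main obstacle is the general (possibly infinite) case: one needs that $\cap_{S' \text{ finite}} \HT^E_0(V'_{S'}) = 0$ follows from $\cap_{S'} V'_{S'} = 0$. I would handle this by expressing $\bar v$ via an approximation in $V' \otimes_E \bbC_p$, using that $\bar v$ lies in every closed subspace $V'_{S'} \hat\otimes_E \bbC_p \subset V' \hat\otimes_E \bbC_p$ and hence in their (closed) intersection, which by left-exactness equals $(\cap_{S'} V'_{S'}) \hat\otimes_E \bbC_p = 0$. The delicate point is to verify that $\hat\otimes_E \bbC_p$ commutes with this filtered intersection of closed subspaces; this is accomplished by the usual argument passing through the unit balls, where the intersection of $p$-adically closed submodules in a $p$-adically complete module is controlled by the Mittag-Leffler property of the approximating quotients, and then inverting $p$. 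With this intersection step established, $\bar v = 0$, completing the proof.
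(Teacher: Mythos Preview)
Your approach has a genuine gap at precisely the place you flag as ``the delicate point.'' You need that
\[
\bigcap_{S'\text{ finite}} \bigl(V'_{S'}\,\widehat{\otimes}_E\,\bbC_p\bigr)=0
\]
given $\bigcap_{S'}V'_{S'}=0$, and you gesture at a Mittag--Leffler argument on unit balls. But no Mittag--Leffler condition has been verified: there is no reason the images of $V'_{S''}\,\widehat{\otimes}\,\bbC_p$ in $V'_{S'}\,\widehat{\otimes}\,\bbC_p$ should stabilize as $S''$ grows, and ``intersection of $p$-adically closed submodules'' does not by itself give what you need. Worse, notice that what you are trying to prove for $V'$ is exactly the statement $(V'\,\widehat{\otimes}\,\bbC_p)_{\bar S}=V'_{\bar S}\,\widehat{\otimes}\,\bbC_p$ in the special case $V'_{\bar S}=0$; this is the same difficulty as the original lemma, so the reduction to the quotient has not bought anything.

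The paper's proof bypasses all of this in two lines: since $E$ is discretely valued, $V$ is orthonormalizable, so after adjusting the norm one chooses an orthonormal basis for the closed subspace $V_S$ and extends it to an orthonormal basis for $V$. In these coordinates the identity $V_S\,\widehat{\otimes}\,\bbC_p=(V\,\widehat{\otimes}\,\bbC_p)_S$ is immediate (a vector lies in the left side iff its coefficients outside the $V_S$-part vanish, and these coefficients are exactly what the operators in $S$ detect). Taking $G_E$-invariants then commutes with taking kernels because the operators are $G_E$-equivariant. The orthonormalizability of $E$-Banach spaces is the real content here, and any honest completion of your argument would almost certainly have to invoke it anyway.
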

\begin{proof} After possibly replacing the norm on $V$ with an equivalent norm, we may choose an orthonormal basis for $V_S$, then extend it to an orthonormal basis for $V$. Using this basis, it is clear that $V_S \ohat \bbC_p = (V \ohat \bbC_p)_S$. Taking $G_E$-invariants commutes with passage to the kernel of the operators in $S$, so we obtain 
\[  (V_S \ohat \bbC_p)^{G_E} = \left((V \ohat \bbC_p)^{G_E}\right)_S, \]
and we conclude. 
\end{proof}

\begin{lemma}\label{lemma:triv-ht-split} If $V$ is an $E$-Banach space with the trivial $G_E$-action and $W$ is an $E$-banach space with a unitary $G_E$-action then 
\[ \HT^E_0(V \widehat{\otimes} W)=V \widehat{\otimes} \HT^E_0(W). \]
In particular, if one of $V$ or $W$ is finite dimensional
\[    \HT^E_0(V \otimes W)=V \otimes \HT^E_0(W). \]
\end{lemma}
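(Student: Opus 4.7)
My plan is to reduce to an orthonormal basis computation, along the lines already used in the proof of Lemma \ref{lemma:set-of-operators}. First I would observe that, up to replacing the norm on $V$ with an equivalent one, we may assume $V$ is orthonormalizeable as an $E$-Banach space, with orthonormal basis $\{e_i\}_{i \in I}$. (This is harmless: all norms appearing end up computing the same $G_E$-invariants, since the action is unitary and equivalent norms define the same Banach representation up to isomorphism.) Using the associativity of the completed tensor product of Banach spaces, I can then identify
\[ (V \widehat{\otimes}_E W) \widehat{\otimes}_E \bbC_p = V \widehat{\otimes}_E (W \widehat{\otimes}_E \bbC_p), \]
and both sides can be described concretely as the space of null series $\sum_{i \in I} e_i \otimes x_i$ with $x_i \in W \widehat{\otimes}_E \bbC_p$ and $x_i \to 0$.

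The key step is to compute the $G_E$-invariants on this concrete model. Since $G_E$ acts trivially on each $e_i$ and acts diagonally on $W \widehat{\otimes}_E \bbC_p$ (through the unitary action on $W$ and the Galois action on $\bbC_p$), we have
\[ g \cdot \sum_i e_i \otimes x_i = \sum_i e_i \otimes g x_i. \]
By uniqueness of the orthonormal expansion, such a series is $G_E$-fixed if and only if each $x_i$ is individually $G_E$-fixed in $W \widehat{\otimes}_E \bbC_p$, that is, $x_i \in \HT_0^E(W)$. Thus the invariants are exactly the null series $\sum_i e_i \otimes x_i$ with $x_i \in \HT_0^E(W)$, which is $V \widehat{\otimes}_E \HT_0^E(W)$. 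This yields the desired identity for the completed tensor product, and the finite dimensional case is immediate since in that situation the completed tensor product is the algebraic one.

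The main subtlety, and the only real ``obstacle,'' is justifying the orthonormalizeability of $V$ and the associativity of the completed tensor product in the Banach setting. For the former I would appeal to the standard fact (due to Serre) that any $E$-Banach space admits an equivalent norm taking values in $|E^\times| \cup \{0\}$ and is then orthonormalizeable, using that $E$ is discretely valued. For the latter, associativity of $\widehat{\otimes}_E$ for Banach spaces is a standard property and can be checked directly on null-sequence presentations. Once these formalities are in place the argument is essentially the same combinatorial observation about orthonormal bases already used in the proof of Lemma \ref{lemma:set-of-operators}, and needs no genuinely new input.
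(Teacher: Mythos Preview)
Your proposal is correct and matches the paper's own proof essentially step for step: pass to an equivalent norm making $V$ orthonormalizeable, write elements of $(V \widehat{\otimes} W)\widehat{\otimes}\bbC_p$ as null series $\sum_i e_i \otimes w_i$ with $w_i \in W \widehat{\otimes}\bbC_p$, and observe that $G_E$-invariance is detected componentwise by uniqueness of the expansion. The only difference is that you spell out the justifications (Serre's orthonormalizeability result and associativity of $\widehat{\otimes}$) that the paper leaves implicit.
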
 
\begin{proof}
After passing to an equivalent norm on $V$ we may choose an orthonormal $E$-basis $\{e_i\}_{i \in I}$. Then 
\[ V \widehat{\otimes} W = \hat{\oplus}_{i \in I} \left(e_i \otimes W \right) \]
is an orthonormal decomposition. Thus, any element of $(V \widehat{\otimes} W) \widehat{\otimes} \bbC_p$ has a unique expression as $( e_i \otimes w_i)_i $
for $\{w_i\}_{i \in I}$ a collection of vectors in $W \widehat{\otimes} \bbC_p$ such that for any $\epsilon>0$ there is a finite set $J \subset I$ such that $||w_i|| < \epsilon$ for all $i \in I\bs J$. The $G_{E}$-invariants are then precisely those vectors with 
\[ w_i \in (W\wh{\otimes}\bbC_p)^{G_{E}} = \HT^E_0(W) \textrm{ for all $i\in I$}\]
 and this is exactly $V\wh{\otimes} \HT^E_0(W).$   
\end{proof}

\subsection{Compactly supported completed cohomology}\newcommand{\Rlim}{\mathrm{Rlim}}
Recall from \cite{emerton:on-the-interpolation} that the (degree one) compactly supported completed cohomology of the modular curve with $\bbQ_p$-coefficients is defined as
\[ \tilde{H}^1_c:=\left(\lim_n \colim_{K_p} H^i_c(Y_{K_pK^p}, \bbZ/p^n)\right) [1/p], \]
where the $Y_{K_pK^p}$ are finite level (open) modular curves. Completed cohomology with $\bbC_p$-coefficients $\tilde{H}^1_{c,\bbC_p}$ is obtained by replacing $\bbZ/p^n$ with $\calO_{\bbC_p}/p^n$ in the definition.  Both are equipped with their obvious Banach topologies, and $\tilde{H}^1_{c,\bbC_p} = \tilde{H}^1_{c} \widehat{\otimes} \bbC_p.$  Before explaining the identification of $\tilde{H}^1_{c, \bbC_p}$ with the analytic cohomology $H^1(X, \calI)$, we recall a result of Emerton that we will need later on:
\begin{theorem}[Emerton]\label{emerton:loc-alg} For $W$ an an algebraic representation of $\GL_2/\bbQ_p$ and $K_p \subset \GL_2(\bbQ_p)$ a compact open subgroup, there is a canonical identification
\[ \Hom_{\bbQ_p[K_p]}(W, \tilde{H}^1_c) = \Hom_{\bbQ_p[K_p]}(W, \tilde{H}^{1, \locan}_c) =H^i(Y_{K_pK^p}, \calV_{W^*}) \]
where $\calV_{W^*}$ denotes the natural $\bbQ_p$ local system on $Y_{K_pK^p}$ constructed from the dual representation $W^*$. In particular, it is a finite dimensional $\bbQ_p$-vector space.  	
\end{theorem}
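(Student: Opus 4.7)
\medskip

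\noindent\textit{Proof proposal.}
The plan separates into three parts: (i) the two $\Hom$ spaces agree because algebraic representations are locally analytic; (ii) a Shapiro-type identification at each level mod $p^n$ yields $H^1_c$ of the local system $\calV_{W^*}$ after passage to the inverse limit; and (iii) continuous $K_p$-homomorphisms out of $W$ commute with the $p$-adic limit defining $\tilde{H}^1_c$.

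For (i), any $K_p$-equivariant $\bbQ_p$-linear map $\phi : W \to \tilde{H}^1_c$ automatically has image in the locally analytic vectors: for fixed $w \in W$, the orbit map $g \mapsto \phi(gw)$ is the composition of the polynomial orbit map $K_p \to W$, $g \mapsto gw$, with $\phi$, hence is locally analytic in $g$. So $\phi(w) \in \tilde{H}^{1,\locan}_c$.

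For (ii)--(iii), fix a $K_p$-stable $\bbZ_p$-lattice $W_0 \subset W$, and, by continuity of the algebraic action on $W_0$, a decreasing chain of open normal subgroups $K_p^{(n)} \trianglelefteq K_p$ acting trivially on $W_0^*/p^n$. By construction $\calV_{W_0^*/p^n}$ is the descent to $Y_{K_p K^p}$ along the $K_p/K_p^{(n)}$-Galois cover $Y_{K_p^{(n)} K^p} \to Y_{K_p K^p}$ of the constant sheaf $\underline{W_0^*/p^n}$, and the standard Shapiro/Hochschild--Serre computation for this finite cover gives
\[ H^1_c\bigl(Y_{K_p K^p}, \calV_{W_0^*/p^n}\bigr) \;=\; \Hom_{K_p}\bigl(W_0/p^n,\; H^1_c(Y_{K_p^{(n)} K^p}, \bbZ/p^n)\bigr). \]
Taking $\lim_n$: the left side yields $H^1_c(Y_{K_p K^p}, \calV_{W_0^*})$ (with vanishing $R^1\lim$, since the groups are finite so Mittag--Leffler is automatic), which after inverting $p$ becomes $H^1_c(Y_{K_p K^p}, \calV_{W^*})$. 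On the right, a compatible system of $K_p$-maps $W_0/p^n \to H^1_c(Y_{K_p^{(n)} K^p}, \bbZ/p^n)$ is exactly a continuous $K_p$-homomorphism $W_0 \to \tilde{H}^1_{c,\bbZ_p}$: forward, the system assembles into the limit; conversely, any continuous $K_p$-map reduces mod $p^n$ to a $K_p$-map whose image necessarily lies in the $K_p^{(n)}$-invariants of $\tilde{H}^1_{c,\bbZ_p}/p^n$. Inverting $p$ then yields $\Hom_{K_p}(W, \tilde{H}^1_c)$.

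The main obstacle is the identification
\[ \bigl(\tilde{H}^1_{c,\bbZ_p}/p^n\bigr)^{K_p^{(n)}} \;=\; H^1_c(Y_{K_p^{(n)} K^p}, \bbZ/p^n), \]
which asserts that no new $K_p^{(n)}$-invariant classes appear when passing to smaller level in the colimit. I expect to deduce this from injectivity of pullback on $\bmod p^n$ compactly supported cohomology along finite \'etale covers after taking invariants under the deck group, together with a low-degree Hochschild--Serre argument; in Emerton's treatment it is handled via a careful analysis of the pro-system. Once this is in hand, finite-dimensionality of $\Hom_{K_p}(W, \tilde{H}^1_c)$ is automatic from the finite-dimensionality of $H^1_c(Y_{K_p K^p}, \calV_{W^*})$ for a finite-rank $\bbQ_p$-local system on a finite-type variety.
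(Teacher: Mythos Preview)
Your proposal is more detailed than what the paper actually does. The paper's ``proof'' is a two-line citation: part (i) is dispatched exactly as you do it, and for the second identification the paper simply invokes equation (4.3.7) of Emerton's \emph{On the interpolation\ldots} together with the identification $\hat{H}^1_c = \tilde{H}^1_c$ explained there. No argument is reproduced.

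Your outline is essentially a sketch of what lies behind Emerton's result, and it is sound. The Shapiro/Hochschild--Serre step is clean here because $H^0_c$ of an open modular curve vanishes, so the spectral sequence degenerates in the relevant range and you really do get $H^1_c(Y_{K_pK^p}, \calV_{W_0^*/p^n}) = \Hom_{K_p}(W_0/p^n, H^1_c(Y_{K_p^{(n)}K^p}, \bbZ/p^n))$. The obstacle you flag---that $(\tilde{H}^1_{c,\bbZ_p}/p^n)^{K_p^{(n)}}$ should coincide with finite-level cohomology---is exactly the substance of Emerton's comparison $\hat{H}^1_c = \tilde{H}^1_c$ (the former is built from finite-level cohomology with $\bbZ_p$-coefficients, the latter from the $p$-adic limit of torsion coefficients), and you are right that this is where the work lies. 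So both you and the paper ultimately defer to Emerton at the same point; you just unpack more of the surrounding structure.
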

\begin{proof}
The identity of the first two terms is immediate because $K_p$-algebraic vectors are locally analytic. The second identity
follows from taking $K_p$-invariants in equation (4.3.7) on p.63 in \cite{emerton:on-the-interpolation} (note that $\hat{H}^1_c=\tilde{H}^1_c$ in this case, as explained at the top of p.58 of loc. cit.). 
\end{proof}

\begin{lemma}\label{lemma:comparison} There is a natural $G_{\bbQ_p}$-equivariant isomorphism ${H^1(X, \calI) \cong \tilde{H}^1_{c, \bbC_p}}$, where the right-hand side is equipped with the semilinear action. \end{lemma}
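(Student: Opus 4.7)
\textbf{Proof plan for Lemma \ref{lemma:comparison}.} The strategy is to bootstrap the mod $p^n$ comparison of \cite{scholze:torsion} up to $\bbC_p$-coefficients by first taking the inverse limit in $n$ and then inverting $p$, doing just enough work on the analytic cohomology side to push past the almost-mathematics ambiguity and an $R^1\lim$. Concretely, I would proceed in four steps.

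First, I would record the torsion comparison. A mild variant of the primitive comparison theorem, applied to the perfectoid open modular curve $Y \subset X$ together with the strongly Zariski closed boundary $\partial X$ (the same closedness input used in Lemma~\ref{lemma:vanishing}), yields for each $n$ a natural $G_{\bbQ_p}$-equivariant almost isomorphism
\[ \tilde{H}^1_{c, \calO_{\bbC_p}/p^n} \;\xrightarrow{\sim^a}\; H^1(X, \calI^+/p^n). \]
This is essentially what \cite{scholze:torsion} proves; the only repackaging is that the short exact sequence $0 \to \calI^+/p^n \to \calO_X^+/p^n \to \calO_{\partial X}^+/p^n \to 0$ (exact on affinoid perfectoids by strong Zariski closedness of $\partial X$) lets one compare the compactly supported \'{e}tale cohomology of $Y$ with the analytic cohomology of $\calI^+/p^n$ on $X$.

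Second, I would take $\lim_n$ on both sides. The right side yields $\tilde{H}^1_{c, \calO_{\bbC_p}}$ by definition. On the left, I use the Bockstein sequence $0 \to \calI^+ \xrightarrow{p^n} \calI^+ \to \calI^+/p^n \to 0$. The quasi-Stein vanishing already established in the proof of Lemma~\ref{lemma:vanishing} (on the exhaustion $U_k = X_{|z|\geq 1/p^k}$, with its Weierstrass-cover Mittag-Leffler argument) implies that $H^0(X, \calI^+/p^n)$ has $p$-adically dense transition maps, so $R^1\lim_n H^0(X, \calI^+/p^n)$ is almost zero and in particular killed by a fixed power of $p$. Hence the natural map
\[ H^1(X, \calI^+) \;\longrightarrow\; \lim_n H^1(X, \calI^+/p^n) \]
is an isomorphism after inverting $p$, up to bounded $p$-torsion.

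Third, I would invert $p$. After tensoring with $\bbQ_p$, all almost-mathematics ambiguity and all bounded $p$-torsion vanish, and I obtain
\[ H^1(X, \calI) \;=\; H^1(X, \calI^+)[1/p] \;\cong\; \tilde{H}^1_{c, \calO_{\bbC_p}}[1/p] \;=\; \tilde{H}^1_{c, \bbC_p}. \]
Finally, Galois equivariance is built into every step: Scholze's comparison is functorial in the datum of the perfectoid modular curve over $\Spa(\bbC_p,\calO_{\bbC_p})$, the continuous $G_{\bbQ_p}$-action descends to each $\calI^+/p^n$ via the natural structure, and the semilinear action on $\tilde{H}^1_{c,\bbC_p}$ is by construction the one on the coefficient side.

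The main obstacle I anticipate is the bookkeeping at the $\lim$--$R^1\lim$ step: one must show that the transition maps $H^0(X,\calI^+/p^{n+1}) \to H^0(X,\calI^+/p^n)$ are almost surjective (equivalently, that the cokernel is bounded $p$-torsion), which is where Scholze's torsion comparison plus the affinoid perfectoid vanishing of $H^1$ of $\calI^+/p^n$ (and the Mittag-Leffler argument already in Lemma~\ref{lemma:vanishing}) must be combined. Once that is in place, the inversion of $p$ is formal and the Galois-equivariant comparison drops out.
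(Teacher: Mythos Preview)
Your overall strategy---torsion comparison, inverse limit in $n$, invert $p$---matches the paper's. But your execution of the limit step has a real gap, and the paper handles it differently.

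The paper does \emph{not} use the Bockstein sequence. Instead it first shows $\mathrm{Rlim}_n\, \calI^+/p^n =_a \calI^+$ as sheaves, by sandwiching $\calI^+/p^n$ between $\calO_X^+/p^n$ and $\calO_{\partial X}^+/p^n$ (the short exact sequence you mention) and invoking Scholze's result that $R^i\lim$ of $\calO^+/p^n$ on a perfectoid almost vanishes. Then it applies the Milnor exact sequence
\[ 0 \to R^1\lim_n H^0(X,\calI^+/p^n) \to H^1(X, \mathrm{Rlim}\,\calI^+/p^n) \to \lim_n H^1(X,\calI^+/p^n) \to 0. \]
The $R^1\lim$ term is dispatched much more simply than you propose: by the $i=0$ case of the torsion comparison, $H^0(X,\calI^+/p^n)$ is almost isomorphic to $\tilde{H}^0_c \otimes \calO_{\bbC_p}/p^n$, and compactly supported $H^0$ of the open modular curve is zero. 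So each $H^0(X,\calI^+/p^n)$ is already almost zero, and there is nothing to Mittag--Leffler.

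Your appeal to the quasi-Stein argument of Lemma~\ref{lemma:vanishing} is misplaced: that argument controls a \emph{spatial} inverse limit over the exhaustion $U_k = X_{|z|\geq 1/p^k}$ of $X_{|z|>0}$, not the $p$-adic inverse limit over $n$ on all of $X$; and $X$ itself is not quasi-Stein. Moreover, your Bockstein route only yields a map from the $p$-adic completion $\widehat{H^1(X,\calI^+)}$ into $\lim_n H^1(X,\calI^+/p^n)$, so you would still need to show $H^1(X,\calI^+)$ is derived $p$-complete and to control the $H^2(X,\calI^+)[p^n]$ contribution---neither of which you address. The paper's sheaf-level $\mathrm{Rlim}$ computation sidesteps both issues at once.
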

\begin{proof}
In \cite[Theorem 4.2.1]{scholze:torsion}, it is shown that the map $j_! \bbZ/p^n \rightarrow \calI^+/p^n$ (for $j$ the immersion of the open modular curve into the compactified modular curve) induces an almost isomorphism
\begin{equation}\label{eq:scholze-torsion-comparison} H^i(X, \calI^+/p^n) \cong_a \tilde{H}^i_c(\bbZ/p^n) \otimes \calO_{\bbC_p}/p^n. \end{equation} 
It remains to show this passes to the limit: We write $\partial X$ for the boundary. Because $\partial X$ is strongly Zariski closed, we have for each $n$ an (almost) exact sequence 
\[ 0 \rightarrow \calI^+/p^n \rightarrow \calO_X^+/p^n \rightarrow \calO^+_{\partial X} / p^n \rightarrow 0. \]
By the almost version of \cite[Lemma 3.18]{scholze:p-adic-ht}, using a basis of affinoid perfectoids we find that the second two terms have almost vanishing $R^i \lim$ for $i \geq 1$. Thus, taking the long-exact sequence and using almost surjectivity of $\calO_X^+ \rightarrow \calO^+_{\partial X}$, we conclude that so does the system $\calI^+/p^n$, so that 
\begin{equation}\label{eq:rlim-almost-lim} \Rlim \calI^+/p^n =_a \lim \calI^+/p^n = \calI^+. \end{equation}

By \cite[0D6K]{stacks-project}, we have an exact sequence
\[ 0 \rightarrow R^1 \lim H^0(X, \calI^+/p^n) \rightarrow H^1(X, \Rlim \calI^+/p^n) \rightarrow \lim H^1(\calI^+/p^n) \rightarrow 0. \]
But by the $i=0$ case of (\ref{eq:scholze-torsion-comparison}) and vanishing of compactly supported $H^0$, the first term is almost zero, and by the $i=1$ case the right is almost equal to $\tilde{H}^1_{c, \calO_{\bbC_p}}$. By (\ref{eq:rlim-almost-lim}), the middle term is almost $H^1(X, \calI^+)$, and finally, inverting $p$, we obtain the desired isomorphism. It is Galois-equivariant because it comes from the map $j_! \bbZ/p^n \rightarrow \calI^+/p^n$. 
\end{proof}

We now verify that taking locally algebraic vectors commutes with passing to the Hodge-Tate weight zero part, which was used in the proof of Theorem \ref{maintheorem:ocmf-cup}.

\begin{lemma}\label{lemma:loc-alg-ht-weight-zero}
The natural inclusion 
\[ (\tilde{H}^{1, \localg}_{c} \otimes \bbC_p)^{G_{\bbQ_p}} \rightarrow \left(\HT^{\bbQ_p}_0(\tilde{H}^{1}_{c})\right)^\localg \]
is an isomorphism. 
\end{lemma}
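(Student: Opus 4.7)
The plan is to compare isotypic components under irreducible algebraic representations $W$ of $\GL_2$ defined over $\bbQ_p$, and to reduce the problem to a direct application of Lemma \ref{lemma:set-of-operators} at the level of individual $K_p$-invariants.

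Write $V := \tilde{H}^1_c$ and $V_{\bbC_p} := V \widehat{\otimes} \bbC_p$, and for each irreducible algebraic $W$ set
\[ \sigma_W := \colim_{K_p} \Hom_{K_p}(W, V), \qquad \sigma_W^{\bbC_p} := \colim_{K_p} \Hom_{K_p}(W_{\bbC_p}, V_{\bbC_p}). \]
Since algebraic irreducibles of $\GL_2$ in characteristic zero are defined over $\bbQ_p$, there are canonical isotypic decompositions
\[ V^\localg = \bigoplus_{[W]} \sigma_W \otimes_{\bbQ_p} W, \qquad V_{\bbC_p}^\localg = \bigoplus_{[W]} \sigma_W^{\bbC_p} \otimes_{\bbQ_p} W, \]
with $G_{\bbQ_p}$ acting only on the first factor in each (using $\sigma_W^{\bbC_p} \otimes_{\bbC_p} W_{\bbC_p} = \sigma_W^{\bbC_p} \otimes_{\bbQ_p} W$ and trivial Galois action on $W$). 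Taking $G_{\bbQ_p}$-invariants and distributing over the finite-dimensional $W$, the claim reduces to showing $(\sigma_W \otimes \bbC_p)^{G_{\bbQ_p}} = (\sigma_W^{\bbC_p})^{G_{\bbQ_p}}$ for each $W$, and further, since Galois invariants commute with filtered colimits, to the identity
\[ (\Hom_{K_p}(W, V) \otimes_{\bbQ_p} \bbC_p)^{G_{\bbQ_p}} = \Hom_{K_p}(W_{\bbC_p}, V_{\bbC_p})^{G_{\bbQ_p}} \]
for each compact open $K_p$.

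This last identity I would obtain by applying Lemma \ref{lemma:set-of-operators} to the $\bbQ_p$-Banach space $\Hom(W, V) = W^\vee \otimes_{\bbQ_p} V$---a finite direct sum of copies of the unitary $G_{\bbQ_p}$-representation $V$, using that $W$ is finite-dimensional---equipped with the family $S$ of bounded $G_{\bbQ_p}$-equivariant operators $T_{k,w}: \varphi \mapsto \varphi(kw) - k\varphi(w)$ indexed by $k \in K_p$ and $w \in W$ (equivariance follows from the commutation of $G_{\bbQ_p}$ with $K_p$). The simultaneous kernel is $\Hom_{K_p}(W, V)$, which is finite-dimensional by Emerton's Theorem \ref{emerton:loc-alg}, so its $\HT_0^{\bbQ_p}$ coincides with the $G_{\bbQ_p}$-invariants of the algebraic tensor product. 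After extension to $\bbC_p$, the ambient space becomes $\Hom(W_{\bbC_p}, V_{\bbC_p})$ and the simultaneous kernel of $S$ there is $\Hom_{K_p}(W_{\bbC_p}, V_{\bbC_p})$, so Lemma \ref{lemma:set-of-operators} yields the desired equality.

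The main conceptual obstacle I anticipate lies in the bookkeeping around the isotypic decomposition of $V_{\bbC_p}^\localg$---verifying both that Galois really does act only on the $\sigma_W^{\bbC_p}$-factor (which depends on algebraic irreducibles descending to $\bbQ_p$, so that $W_{\bbC_p}$ carries only the Galois action inherited from $\bbC_p$) and that $((V_{\bbC_p})^{G_{\bbQ_p}})^\localg$ agrees with $((V_{\bbC_p})^\localg)^{G_{\bbQ_p}}$ so as to match the statement of the lemma with the object computed through the decomposition. The crucial substantive input is Emerton's theorem, whose finite-dimensionality statement is what eliminates any discrepancy between $\otimes_{\bbQ_p} \bbC_p$ and $\widehat{\otimes} \bbC_p$ once one passes to $K_p$-equivariance, making Lemma \ref{lemma:set-of-operators} directly applicable.
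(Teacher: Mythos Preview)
Your proposal is correct and follows essentially the same route as the paper: reduce to $W$-isotypic components for a fixed compact open $K_p$, apply Lemma~\ref{lemma:set-of-operators} to $\Hom_{\bbQ_p}(W,\tilde{H}^1_c)$ with the family of operators cutting out $K_p$-equivariance, and invoke Emerton's Theorem~\ref{emerton:loc-alg} so that the completed tensor product collapses to an algebraic one. The only cosmetic differences are that the paper works at a single fixed $K_p$ rather than passing through the colimit, and makes explicit use of Lemma~\ref{lemma:triv-ht-split} to pull the finite-dimensional $W$ (and $W^*$) through $\HT_0$, whereas you absorb this into the observation that $W^\vee\otimes V$ is a finite direct sum of copies of $V$.
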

\begin{proof}
It suffices to show that for any choice of an irreducible algebraic representation $W$ of $\GL_2/\bbQ_p$ and a compact open subgroup $K\subset \GL_2(\bbQ_p)$, the statement holds after replacing locally algebraic vectors everywhere with the $W$-isotypic part for the $K$-action. Expressing these vectors via the standard evaluation maps, this means we need to show that
\[ \left( \left(W\otimes\Hom_{\bbQ_p[K]}(W, \tilde{H}^{1}_{c})\right) \otimes \bbC_p\right)^{G_{\bbQ_p}} = W \otimes \Hom_{\bbQ_p[K]}(W, \HT_0( \tilde{H}^{1}_{c})). \]
To see this, we first apply Lemma \ref{lemma:set-of-operators} to 
$\Hom_{\bbQ_p}(W, \tilde{H}^1_c)$  with $S$ the set of operators $(k - \Id)$ for $k \in K$ to obtain
\begin{align*}
	\HT_0\left(\Hom_{\bbQ_p[K]}(W, \tilde{H}^{1}_{c})  \right) &= \HT_0\left( \left(\Hom_{\bbQ_p}(W, \tilde{H}^{1}_{c})\right)_S \right)\\
	&= \left(\HT_0(\Hom_{\bbQ_p}(W, \tilde{H}^{1}_{c}))\right)_S \\
	&= \left(\HT_0(W^*\otimes \tilde{H}^{1}_{c})\right)_S \\
	&= \left(W^* \otimes \HT_0( \tilde{H}^{1}_{c})\right)_S \\
	&= \left(\Hom_{\bbQ_p}(W, \HT_0(\tilde{H}^{1}_{c}))\right)_S \\
	&= \Hom_{\bbQ_p[K]}(W, \HT_0( \tilde{H}^{1}_{c})).
\end{align*}
Here to pass from the third line to the fourth we have used Lemma \ref{lemma:triv-ht-split}. Tensoring with $W$ and applying Lemma \ref{lemma:triv-ht-split} again we obtain
\[ \HT_0\left(W\otimes \Hom_{\bbQ_p[K]}(W, \tilde{H}^{1}_{c})  \right) = W \otimes \Hom_{\bbQ_p[K]}(W, \HT_0( \tilde{H}^{1}_{c})). \]
By Theorem \ref{emerton:loc-alg}, $\Hom_{\bbQ_p[K]}(W, \tilde{H}^1_c)$ is finite dimensional, so the completed tensor product in the formation of $\HT_0$ on the left is just a tensor product, and thus the left-hand side is equal to
\[ \left( \left(W\otimes\Hom_{\bbQ_p[K]}(W, \tilde{H}^{1}_{c})\right) \otimes \bbC_p\right)^{G_{\bbQ_p}}. \]
\end{proof}

\subsection{Proof of Corollary \ref{c:hts-weights}}\label{subsec:galois-corollary-proof}\newcommand{\bbT}{\mathbb{T}}

As always we have fixed a prime-to-$p$ level $K^p$. For $\Sigma$ a finite set of primes containing $p$ and those ramified in $K^p$, let $\bbT=\bbT_{\Sigma}$ be the tame Hecke algebra generated by the spherical Hecke operators at primes $\ell \not\in \Sigma$. Let $E \subset \bbC_p$ be a finite extension of $\bbQ_p$ and $f$ a $p$-adic modular form of level $K^p$ defined over $E$. If $f$ is a $\bbT$-eigenform, there is an associated maximal ideal $\frakp$ of $\bbT[1/p]$ and, after possibly enlarging $E$, a semisimple 2-dimensional representation $\rho=\rho(f)$ of $G_{\bbQ}$ over $E$. 

Assume that $\overline{\rho}$ is absolutely irreducible. As a consequence of \cite[Lemma 5.5.3]{emerton:local-global-proof}, there is an $E$-Banach space $V$ on which $G_{\bbQ_p}$ acts trivially such that
\begin{equation}\label{eqn:lgc} \tilde{H}^1_{c,E}[\frakp] = \tilde{H}^1_{E}[\frakp] = V \otimes \rho. \end{equation}
Here for the first equality we use that the kernel of the surjective map $\tilde{H}^1_{c,E} \rightarrow \tilde{H}^1_E$ comes from the zero dimensional cohomology of the boundary -- this cohomology is Eisenstein, so, by the condition on $\overline{\rho}$, we can localize away from it. 

Suppose now in addition that $f$ is cuspidal overconvergent and of weight $\kappa$ with $\Lie \kappa \neq 1$. Applying Lemmas \ref{lemma:set-of-operators} and \ref{lemma:triv-ht-split}, we deduce that 
\[ \HT_0^E(\tilde{H}^1_{c,E})[\frakp] = V \otimes \HT_0^E(\rho). \]
Because the pairing of Theorem \ref{maintheorem:ocmf-cup} can be constructed already over $E$, $f$ gives rise to non-zero elements in the left-hand side. Thus, $\HT_0^E(\rho) \neq 0$, so $0$ is a Hodge-Tate-Sen weight of $\rho$. The determinant of the Galois representation attached to \emph{any} $p$-adic modular form of weight $\kappa$ is a character with infinitesimal weight $\Lie\kappa -1$, thus we conclude that the Hodge-Tate-Sen weights of $\rho$ are $0$ and $\Lie \kappa - 1$. 

\begin{remark} Here we are taking the $\sigma$-Hodge-Tate-Sen weights with respect to $\sigma$ our fixed embedding of $E$ in $\bbC_p$, but it follows from this result that for any embedding $\sigma:E \rightarrow \bbC_p$ the $\sigma$-Hodge-Tate-Sen weights of $\rho$ are $0$  and $\sigma(\Lie \kappa) - 1.$ 
\end{remark}

\bibliographystyle{plain}
\bibliography{refs}

\end{document}